\documentclass[11pt]{article}

\usepackage{amsmath,amssymb,amscd,amsthm}
\usepackage{ascmac}
\usepackage[all]{xy}
\usepackage{enumerate}
\usepackage{cancel}
\usepackage[dvips]{graphicx}
\usepackage{authblk}
\usepackage{comment}
\usepackage{mathrsfs}
\usepackage{array,arydshln}
\usepackage{geometry}
\usepackage{url}
\usepackage{ulem}
\geometry{left=25mm,right=25mm,top=30mm,bottom=30mm}


\theoremstyle{plain}
\newtheorem{thm}{Theorem}[section]
\newtheorem{pro}[thm]{Proposition}
\newtheorem{cor}[thm]{Corollary}
\newtheorem{lem}[thm]{Lemma}

\newtheorem{dfn-thm}[thm]{Definition-Theorem}
\newtheorem{dfn-pro}[thm]{Definition-Proposition}

\theoremstyle{definition}

\newtheorem{dfn}[thm]{Definition}

\newtheorem{asm}[thm]{Assumption}
\newtheorem{nota}[thm]{Notations}

\theoremstyle{remark}
\newtheorem{rmk}[thm]{Remark}


\newcommand{\bb}[1]{\mathbb{#1}}

\newcommand{\ind}{{\rm ind}}
\newcommand{\id}{{\rm id}}

\newcommand{\Pin}{{\rm Pin}}
\newcommand{\End}{{\rm End}}

\newcommand{\Tr}{{\rm Tr}}

\newcommand{\SW}{{\rm SW}}
\newcommand{\bra}[1]{\left(#1\right)}
\newcommand{\bbra}[1]{\left\{#1\right\}}
\newcommand{\bbbra}[1]{\left[#1\right]}

\newcommand{\tr}{{\rm tr}}

\newcommand{\R}{{\mathbb R}}
\newcommand{\HH}{{\mathbb H}}
\newcommand{\C}{{\mathbb C}}
\newcommand{\Z}{{\mathbb Z}}

\begin{document}
\title{Seiberg-Witten theory on finite covering spaces of spin $4$-manifolds}
\author{Tsuyoshi Kato, Doman Takata}

\date{\today}

\maketitle
\begin{abstract}

We compute the equivariant Bauer-Furuta degree, when a finite group acts
freely on a spin $4$-manifold.
In the case when the group is cyclic of order
power of two, Bryan gave a formula and its applications.
We have treated the case when the group has order
of odd degree. In particular we gave a formula
of the degree when the order is odd-prime.
Our approach is to use a representation-theoretic method
on finite dimensional approximations of the functional spaces.
\\\\
{\it Mathematics Subject Classification $(2020)$.} 57K41; 19L47, 20C05.
\end{abstract}

\section{Introduction}
The Bauer-Furuta theory provides with an algebro-topological
framework of a non-linear differential
mapping between Sobolev spaces
over a compact four-manifold.
In this article, we are interested in the variant with a finite group action. In particular we induce the formula of the 
equivariant degree of the monopole map 
for a class of finite group actions.
Even though it is a quite general formula, 
it involves various algebraic operations over representations
of the finite group. Hence, we start by describing a much simpler case and then, we naturally generalizes it 
to the equivariant-monopole case.

Let $X$ be a  Spin $4$-manifold equipped with a free 
Spin-action of a finite group $\Gamma$. We may assume $\sigma(M)\leq 0$.
We denote the orbit space $X/\Gamma$ by $M$. 
Throughout the paper, we assume $b^1(M)=0$, $b^+(M)>0$ and $X$ is connected.
Note that $b^+(X)$ is positive whenever $\#\Gamma >1$, because the Atiyah-Hitchin-Singer index is multiplicative by taking covering.
In this paper we present a representation-theoretic description of the degree $\alpha(SW_X)$ of the Seiberg-Witten (SW) map, that takes the value in 
the representation ring
$R(\Gamma \times \Pin(2)) \cong
R(\Gamma )\otimes R(\Pin(2))$.
Recall that $R(\Pin(2))$ is generated by two elements $c$ and $h$
that satisfy two relations $c^2=1$ and $ch=h$ (see Section \ref{rev}).

Our main theorem is the following.

\begin{thm}
Let $m=b^+(M)$ and $k=-\frac{\sigma(M)}{16}$.
For  $ \Gamma = \mathbb Z_p$ for odd prime $p$, we have the  formula:
\[
\alpha(\SW_X)=\bra{\frac{2^{(m-2k+1)p-2}-2^{m-2k-1}}{p}[L^2(\bb{Z}_p)]+2^{m-2k-1}\rho_{\rm triv}}(1-c).
\]
\end{thm}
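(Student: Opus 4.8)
The plan is to extract $\alpha(\SW_X)$ from a finite--dimensional approximation of the monopole map, combined with the equivariant index theory of the free covering $X\to M$. Approximate the monopole map by a based $\bb{Z}_p\times\Pin(2)$--map $f\colon S^V\to S^W$, so that $\alpha(\SW_X)\in R(\bb{Z}_p\times\Pin(2))$ is its $K$--theoretic degree. The first step is to identify the virtual representation $V\ominus W$. Its spinorial part is $\ind_{\bb{Z}_p\times\Pin(2)}\Dirac^+_X$; since $\bb{Z}_p$ acts freely, the Atiyah--Singer $G$--index theorem kills the $\gamma$--contribution for $\gamma\neq1$, while the Atiyah--Hitchin--Singer index is multiplicative along the covering, so $\ind_{\C}\Dirac^+_X=2pk$, and together with the quaternionic $\Pin(2)$--structure this forces $\ind_{\bb{Z}_p\times\Pin(2)}\Dirac^+_X=k\,h\,[L^2(\bb{Z}_p)]$. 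For the $d^+\oplus d^*$ part, the $G$--signature theorem (again zero off the identity, the fixed set being empty) pins down the $\bb{Z}_p$--module $H^+(X;\R)$: its Lefschetz numbers equal $-1$ for $\gamma\neq1$, and, by multiplicativity of $\chi$ and $\sigma$, $b^+(X)=p(m+1)-1$; hence $\mathcal{H}^+(X)\oplus\mathcal{H}^0(X)$ complexifies to $(m+1)[L^2(\bb{Z}_p)]$ and $[H^+(X;\C)]=(m+1)[L^2(\bb{Z}_p)]\ominus\rho_{\rm triv}$, so that the form part of $V\ominus W$ is $\ominus(m+1)\,c\,[L^2(\bb{Z}_p)]$.

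The second, and central, step is to reduce the degree of the nonlinear map $f$ to an explicit $\lambda_{-1}$--class. Following the structure theory of the monopole map of Bauer--Furuta and the equivariant computation of Bryan, one homotopes $f$ \emph{$\bb{Z}_p$--equivariantly} into a standard model: the surjective part of $\Dirac^+$ is cancelled against the negative spinors, the injective linear part of $d^+\oplus d^*$ against a complementary block of self--dual forms, and the residual map is of Thom--space type built from the quadratic term $q$ on $\ker\Dirac^+$. Feeding the representation data of Step 1 into the resulting Euler--class formula (whose underlying virtual representation is the cokernel $[H^+(X;\C)]$ minus the index $\ind_{\C}\Dirac^+_X$, twisted by $c$) gives
\[
\alpha(\SW_X)=(1-c)^{m-2k}\,Q^{m-2k+1},\qquad\text{where }\ \lambda_{-1}\bigl([L^2(\bb{Z}_p)]\otimes c\bigr)=(1-c)\,Q .
\]
By Furuta's $10/8$ theorem applied to $M$ (using $b^+(M)>0$) we have $m\geq 2k+1$, so $m-2k\geq1$ and all the exponents occurring are non--negative; for $p=1$ the right--hand side reduces to $(1-c)^{m-2k}=2^{m-2k-1}(1-c)$, the known non--equivariant value.

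The third step is the algebraic evaluation, carried out via characters. The basic identity is
\[
\lambda_{-1}\bigl([L^2(\bb{Z}_p)]\otimes c\bigr)=\prod_{j=0}^{p-1}\bigl(1-c\,\chi^j\bigr),
\]
$\chi$ a generator of $\widehat{\bb{Z}_p}$: its $\bb{Z}_p\times\Pin(2)$--character is $(1-\chi_c(g))^p$ at $(1,g)$ and, because $\{\chi^j(\gamma)\}_j$ runs over all $p$--th roots of unity for $\gamma\neq1$, equals $1-\chi_c(g)$ at $(\gamma,g)$ with $\gamma\neq1$; hence $Q=\tfrac{2^{p-1}-1}{p}[L^2(\bb{Z}_p)]+\rho_{\rm triv}$, the coefficient being integral by Fermat's little theorem. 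Since $Q$ has $\bb{Z}_p$--character $(2^{p-1},1,\dots,1)$, its $(m-2k+1)$--st power has character $(2^{(m-2k+1)(p-1)},1,\dots,1)$, i.e. $Q^{m-2k+1}=\tfrac{2^{(m-2k+1)(p-1)}-1}{p}[L^2(\bb{Z}_p)]+\rho_{\rm triv}$; combining this with $(1-c)^{m-2k}=2^{m-2k-1}(1-c)$ and multiplying out yields exactly the asserted formula, the coefficient $\tfrac{2^{(m-2k+1)p-2}-2^{m-2k-1}}{p}$ being again integral by Fermat.

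The main obstacle is the second step: proving that the $\bb{Z}_p\times\Pin(2)$--equivariant $K$--degree of the genuinely nonlinear monopole map coincides with the $\lambda_{-1}$--class of the cokernel--minus--index representation. Non--equivariantly this is essentially Bryan's (or Furuta's) computation; the additional work is to perform the finite--dimensional approximation and all the cancelling homotopies $\bb{Z}_p$--equivariantly — possible precisely because $\bb{Z}_p$ acts freely on $X$, so no fixed--point strata appear beyond those already present for $\Pin(2)$ — and to track the $R(\bb{Z}_p\times\Pin(2))$--module structure of the Thom and Euler classes throughout, in particular to check that the quadratic term $q$ absorbs the $\ker\Dirac^+$ directions (of type $h$) into self--dual--form directions (of type $c$) compatibly with the free action, so that the residual class is purely of type $c$ and the root--of--unity evaluation of Step 3 applies cleanly.
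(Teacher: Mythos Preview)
Your Steps~1 and~3 are sound: the identification of the virtual $\bb{Z}_p\times\Pin(2)$--representation via the $G$--index theorem is correct, and the character algebra with $\lambda_{-1}\bigl([L^2(\bb{Z}_p)]\otimes c\bigr)=(1-c)Q$, $Q=\tfrac{2^{p-1}-1}{p}[L^2(\bb{Z}_p)]+\rho_{\rm triv}$, is exactly what is needed at the end.  The gap is Step~2.  The $K$--degree of a proper equivariant map is \emph{not} determined by $V\ominus W$; Furuta's computation $\alpha(\SW_M)=2^{m-2k-1}(1-c)$ is not obtained by homotoping to a ``standard model'' whose degree is a $\lambda_{-1}$--class, but by exploiting the $\Pin(2)$--fixed--point structure (the map is linear on the $S^1$--fixed set, etc.) together with divisibility constraints in $R(\Pin(2))$.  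You invoke Bryan, but his argument is specific to involutions and does not produce your formula $\alpha(\SW_X)=(1-c)^{m-2k}Q^{m-2k+1}$ for odd $p$.  You yourself flag this as ``the main obstacle'' and offer only a sketch of ingredients; as written, the central identity is asserted rather than proved.

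The paper circumvents this entirely by a different mechanism.  Instead of analysing the nonlinear map on $X$, it uses the covering $\pi:X\to M$ to build a commutative square of proper $\bb{Z}_p\times\Pin(2)$--maps with $\SW_X$, $\SW_M$, $\pi_0^*$, $\pi_1^*$ on the edges; passing to $K_G$ via Thom isomorphisms yields $\alpha(\SW_X)\cdot e_0=e_1\cdot\alpha(\SW_M)$, where $e_0,e_1$ are Euler classes of the complements of the $\pi_i^*$, computed through the Peter--Weyl decomposition of section spaces on $X$ into isotypic pieces over $M$.  For $p$ odd prime one checks that the common factor has nonzero trace at $(\gamma,g)$ for $\gamma\neq\overline{0}$ and generic $g\in S^1$, and for all $(\gamma,J)$; the first forces $\alpha_k(X)=0$ for $k>0$ and $\alpha_0(X)=\widetilde{\alpha}_0(X)$, the second yields the character values $\tr_\gamma(\alpha_0(X))=2^{m-2k-1}$ for $\gamma\neq\overline{0}$ and $2^{(m-2k+1)p-2}$ for $\gamma=\overline{0}$, from which the formula follows by character theory.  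The point is that the only nonlinear input needed is Furuta's \emph{non}--equivariant value $\alpha(\SW_M)=2^{m-2k-1}(1-c)$ on the quotient; your Step~2 would amount to reproving an equivariant analogue on $X$ from scratch, which is precisely what the paper's covering argument avoids.
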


Bryan solved a similar problem for several even order 
$\Gamma$'s (see the proof of Theorem $1.4$ in \cite{Bry}).
  In order to compare it  with our result, we impose extra conditions on Bryan's result.

\begin{thm}[\cite{Bry}]\label{bry}
Suppose that $(\bb{Z}_{2})^q$ freely acts on a Spin $4$-manifold $X$ preserving the Spin structure.
Let $M:= X/(\bb{Z}_2)^q$. If $b^+(X)\neq b^+(X/\left<g\right>)$ for any non-trivial $g\in (\bb{Z}_{2})^q$,
$$\alpha(\SW_X)=2^{2^q(m-2k+1)-2-q}[L^2((\bb{Z}_2)^q)](1-c).$$
\end{thm}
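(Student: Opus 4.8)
The plan is to derive the stated formula as a specialization of the general computation carried out in the proof of Theorem~$1.4$ of \cite{Bry}: no new analytic input is needed, and the content of the argument is to track how the extra hypothesis collapses Bryan's general answer to the displayed expression.

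First I would recall the mechanism behind Bryan's formula. After a Furuta-type finite-dimensional approximation, the monopole map of $X$ becomes a proper $(\bb{Z}_2)^q\times\Pin(2)$-equivariant map $f\colon V\to W$ of one-point compactifications of finite-dimensional real representations, and $\alpha(\SW_X)$ is extracted from the equivariant stable homotopy class of $f$ together with the virtual index $[V]-[W]\in R((\bb{Z}_2)^q\times\Pin(2))$. The size of the model is governed by two index computations that are multiplicative under finite covering: the quaternionic Dirac index of $X$ is $k_X=2^qk$, and the Atiyah--Hitchin--Singer index of $X$ is $2^q$ times that of $M$, so that $b^+(X)+1-b^1(X)-2k_X=2^q(m-2k+1)$. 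The Dirac contribution enters in the $h$-direction of $R(\Pin(2))$ and the self-dual contribution in the $c$-direction; since $\Pin(2)$-equivariantly the relevant class is detected by a power of $1-c$ and $(1-c)^{N-1}=2^{N-2}(1-c)$ for $N=2^q(m-2k+1)$, this accounts for the overall factor $(1-c)$ and for the exponent $2^q(m-2k+1)-2$ before the final normalization.

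Next I would feed in the hypothesis. Choosing a $(\bb{Z}_2)^q$-invariant metric, $H^+(X;\R)$ is a genuine subrepresentation of $H^2(X;\R)$ and, by transfer, $b^+(X/H)=\dim H^+(X;\R)^H$ for every subgroup $H$. Hence ``$b^+(X)\neq b^+(X/\langle g\rangle)$ for every nontrivial $g$'' is exactly the statement that $(\bb{Z}_2)^q$ acts faithfully on $H^+(X;\R)$, which in turn forces $b^+(X/H)<b^+(X)$ for \emph{every} nontrivial subgroup $H\le(\bb{Z}_2)^q$. In Bryan's framework $\alpha(\SW_X)$ is assembled over the lattice of subgroups, the term indexed by $H$ carrying the permutation representation $[L^2((\bb{Z}_2)^q/H)]$ and a coefficient built from the Bauer--Furuta data of the intermediate cover $X/H$; the strict drop of $b^+$ on each proper subcover forces all the correction terms with $H\neq\{1\}$ to vanish, leaving only the $H=\{1\}$ term, which carries the regular representation $[L^2((\bb{Z}_2)^q)]$. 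Finally, since the action on the bulk of the finite-dimensional model is free, passing to that free part divides the numerical coefficient by $2^q$, which produces the $-q$ in the exponent; collecting the constants from the previous paragraph then gives $\alpha(\SW_X)=2^{2^q(m-2k+1)-2-q}[L^2((\bb{Z}_2)^q)](1-c)$.

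The step I expect to be the main obstacle is precisely this collapse: one has to extract from \cite{Bry} the exact shape of the general formula over the subgroup lattice and verify that the faithfulness hypothesis annihilates \emph{every} contribution with $H\neq\{1\}$, while reconciling Bryan's normalizations (the precise meaning of $\alpha$, the sign convention for $\sigma$, the bookkeeping of $b^1$ and of the extra trivial summand responsible for the ``$+1$'' in $m-2k+1$, and the identification of $1-c$ with his generators) with the conventions fixed in Section~\ref{rev}. A secondary point is to carry the power of $2$ correctly through the repeated use of $(1-c)^2=2(1-c)$ together with the division by $2^q$ coming from freeness.
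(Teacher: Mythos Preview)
The paper does not prove this statement at all: Theorem~\ref{bry} is stated with attribution to \cite{Bry} (the authors point to the proof of Theorem~1.4 there) and is used only as input for the $\bb{Z}_6$ application in Corollary~\ref{Z_6-action}. So there is no in-paper argument to compare your proposal against.

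As a reconstruction of what must be happening in \cite{Bry}, your sketch is reasonable and the numerology is handled correctly: the multiplicativity of the two indices under the $2^q$-fold cover, the identity $(1-c)^2=2(1-c)$ giving $(1-c)^{N-1}=2^{N-2}(1-c)$, and the reading of the hypothesis as faithfulness of the $(\bb{Z}_2)^q$-action on $H^+(X;\R)$ are all right. The part you yourself flag as the main obstacle---the assertion that Bryan's answer is organized over the subgroup lattice and that faithfulness kills every $H\neq\{1\}$ contribution, with the residual division by $2^q$---is where the actual content lies, and your description of it remains heuristic rather than a proof. To turn this into a genuine argument you would have to open \cite{Bry}, locate the exact formula or inductive step your hypothesis feeds into, and check the normalization dictionary you list at the end; but as a plan it points to the correct source and the correct mechanism.
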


These two results cover the case of cyclic groups of prime order.
On the other hand, for the general case of finite groups $\Gamma$,
it is not so easy to induce a formula of the degree.
However, we can induce some constrains of the coefficients.

First, let us consider the non-prime cyclic case.
As a typical example, we consider $\Gamma =\Z_6$ case.
Note that we have the inductions of the representation ring
through the group homomorphisms
$i_2:\Z_2 \hookrightarrow \Z_6$ 
and $i_3:\Z_3 \hookrightarrow \Z_6$.
By using these inductions and the above theorems, we obtain the following.
Let $\rho_k$ be the irreducible representation of $\bb{Z}_6$ given by $\rho_l(\overline{n})=(e^{\frac{2\pi li}{6}})^{n}$.

\begin{cor}

Let $\alpha(\SW_X)=\sum_{l=0}^5 \beta_l\otimes \rho_l \in R({\rm Pin}(2))\otimes R(\bb{Z}_6)$.
Let $m_X= \ind (D_{{\rm AHS}}(X)) -1 = b^+(X)$, $2k_X= \ind (D_{{\rm Spin}}(X)) = - \frac{\sigma(X)}{8}$ and
\begin{align*}
& A:= 2^{m_X-2k_X+1-2-1}(1-c)
= 2^{m_X-2k_X-2}(1-c), \\
  & B:=\frac{2^{m_X-2k_X-1}+
 2^{\frac{m_X-2k_X+1}{3}-1}}{3}(1-c), \\
 &C:=\frac{2^{m_X-2k_X-1}-
 2^{\frac{m_X-2k_X+1}{3}-2}}{3}(1-c).
 \end{align*}
Then, the equations
\begin{align*}
\beta_2&= A-C-\beta_0+\beta_1,\\
\beta_3&= B-\beta_0, \\
\beta_4&= C-\beta_1, \\
\beta_5&= -A + 2C + \beta_0 - \beta_1
\end{align*}
hold in $R(\Pin(2))$.

In particular
$\alpha(\SW_X)=\sum_{l=0}^5 \beta_l\otimes \rho_l$
is determined by the two elements $\beta_0$ and $\beta_1$.
\end{cor}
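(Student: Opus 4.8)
\emph{Proof plan.} The idea is to determine $\alpha(\SW_X)$ partially by pushing it forward along the two maps on representation rings induced by $i_2$ and $i_3$, and then identifying the images by means of the formulas already at our disposal. Write $H_2=i_2(\bb{Z}_2)$ and $H_3=i_3(\bb{Z}_3)$ for the subgroups of $\bb{Z}_6$ of order $2$ and $3$. Restricting the free spin-action of $\bb{Z}_6$ on $X$ to $H_p$ gives a free spin-action of $\bb{Z}_p$, and the $\bb{Z}_6\times\Pin(2)$-equivariant finite-dimensional approximation of the monopole map over $X$ is then \emph{a fortiori} a $\bb{Z}_p\times\Pin(2)$-equivariant one; by the very construction of the degree, feeding it into the same recipe computes the $\bb{Z}_p$-equivariant degree $\alpha_{\bb{Z}_p}(\SW_X)$ of this restricted action. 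Hence the restriction homomorphisms $i_p^{*}\colon R(\bb{Z}_6\times\Pin(2))\to R(\bb{Z}_p\times\Pin(2))$ satisfy $i_2^{*}\,\alpha(\SW_X)=\alpha_{\bb{Z}_2}(\SW_X)$ and $i_3^{*}\,\alpha(\SW_X)=\alpha_{\bb{Z}_3}(\SW_X)$. Making this naturality statement precise is the conceptual crux; everything that follows is bookkeeping.

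Next I would compute both sides. On the group side, $\rho_l(\overline{3})=(-1)^{l}$ and $\rho_l(\overline{2})=(e^{2\pi i/3})^{l}$, so $i_2^{*}\rho_l$ depends only on $l\bmod 2$ and $i_3^{*}\rho_l$ only on $l\bmod 3$. Writing $R(\bb{Z}_2)=\bb{Z}[\tau]/(\tau^{2}-1)$ and $R(\bb{Z}_3)=\bb{Z}[\xi]/(\xi^{3}-1)$, this gives
\[
i_2^{*}\alpha(\SW_X)=(\beta_0+\beta_2+\beta_4)+(\beta_1+\beta_3+\beta_5)\tau,\qquad
i_3^{*}\alpha(\SW_X)=(\beta_0+\beta_3)+(\beta_1+\beta_4)\xi+(\beta_2+\beta_5)\xi^{2}.
\]
On the analytic side, the Main Theorem (for $p=3$) applies to the restricted $\bb{Z}_3$-action, whose orbit space is the $3$-fold cover $X/H_3$ of $M$, and Theorem~\ref{bry} (for $q=1$) applies to the restricted $\bb{Z}_2$-action, with orbit space $X/H_2$; the hypotheses are met because $b^1(X)=0$ (which is built into the identity $m_X=\ind(D_{{\rm AHS}}(X))-1=b^+(X)$) forces $b^1(X/H_p)=0$, because $b^+(X/H_p)>0$, and because the pulled-back spin structure is preserved. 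Since $D_{{\rm AHS}}$ and $D_{{\rm Spin}}$ over $X$ are pullbacks of the corresponding operators over $X/H_p$, their indices are multiplied by the covering degree, so that $m_{X/H_3}-2k_{X/H_3}+1=\frac13(m_X-2k_X+1)$ and $m_{X/H_2}-2k_{X/H_2}+1=\frac12(m_X-2k_X+1)$. Substituting this into the two formulas and using $[L^2(\bb{Z}_2)]=1+\tau$ and $[L^2(\bb{Z}_3)]=1+\xi+\xi^{2}$ yields $\alpha_{\bb{Z}_2}(\SW_X)=A+A\tau$ and $\alpha_{\bb{Z}_3}(\SW_X)=B+C\xi+C\xi^{2}$, with $A,B,C$ exactly as in the statement.

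Comparing the two pairs of expressions coefficientwise gives the five relations $\beta_0+\beta_2+\beta_4=A$, $\beta_1+\beta_3+\beta_5=A$, $\beta_0+\beta_3=B$, $\beta_1+\beta_4=C$, $\beta_2+\beta_5=C$ in $R(\Pin(2))$. From these, $\beta_3=B-\beta_0$ and $\beta_4=C-\beta_1$; the relation $\beta_0+\beta_2+\beta_4=A$ then forces $\beta_2=A-C-\beta_0+\beta_1$, and $\beta_2+\beta_5=C$ forces $\beta_5=-A+2C+\beta_0-\beta_1$, which are the four displayed equations. The leftover relation $\beta_1+\beta_3+\beta_5=A$ is automatically consistent, being equivalent to the numerical identity $B+2C=2A$ — a one-line verification, which conceptually just says that $i_2^{*}$ and $i_3^{*}$, each followed by augmentation over the finite factor, recover the same non-equivariant degree $\sum_{l}\beta_l$ of $\SW_X$. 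Thus $\alpha(\SW_X)=\sum_{l=0}^{5}\beta_l\otimes\rho_l$ is pinned down by $\beta_0$ and $\beta_1$; this is the best the method can do, since $\bb{Z}_6$ is not of prime order and is governed neither by the Main Theorem nor by Theorem~\ref{bry}.

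The principal obstacle is the first step: establishing that $i_p^{*}\alpha(\SW_X)=\alpha_{\bb{Z}_p}(\SW_X)$, i.e. that the equivariant degree is natural under restriction of the symmetry group — this requires going back to the construction of the degree from finite-dimensional approximations. After that, the proof is substitution into the two known formulas plus the multiplicativity of the index under finite coverings; a minor auxiliary point is to check that the standing hypotheses pass to $X/H_2$ and $X/H_3$, which they do once $b^1(X)=0$ is observed.
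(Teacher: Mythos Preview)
Your proposal is correct and follows essentially the same route as the paper: restrict $\alpha(\SW_X)$ along $i_2$ and $i_3$, identify the images via Theorem~\ref{bry} (for $q=1$) and Theorem~\ref{main thm 2} (for $p=3$) to obtain the five linear relations $\beta_0+\beta_2+\beta_4=A$, $\beta_1+\beta_3+\beta_5=A$, $\beta_0+\beta_3=B$, $\beta_1+\beta_4=C$, $\beta_2+\beta_5=C$, and solve. Your write-up is in fact more careful than the paper's on two points it leaves implicit --- the naturality $i_p^{*}\alpha(\SW_X)=\alpha_{\bb{Z}_p}(\SW_X)$ and the consistency identity $B+2C=2A$ --- so nothing is missing.
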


Second, let us consider the general case of finite groups 
$\Gamma$ with odd order.
Let us denote
$\alpha(SW_X) = \alpha_0(X)-\widetilde{\alpha}_0(X)c+\sum_{k=1}^\infty \alpha_k(X)h^k \in R(\Gamma \times \Pin(2)) $.
We verify the following general formula.

\begin{thm}\label{thm main thm}
Suppose that the order of $\Gamma$ is odd. Then,
$$\alpha_0(X)+\widetilde{\alpha}_0(X)=
2^{m-2k}\bra{\frac{2^{(\#\Gamma-1)(m-2k+1)}-1}{\#\Gamma}[L^2(\Gamma)]+\rho_{\rm triv}}.$$
\end{thm}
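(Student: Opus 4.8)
\emph{Step 1: reduction to an evaluation.}
On $R(\Pin(2))=\Z[c,h]/(c^2-1,\,ch-h)$ the substitution $c\mapsto-1$, $h\mapsto0$ is exactly evaluation of the $\Pin(2)$-character at an element $j$ generating a subgroup $\langle j\rangle\cong\Z_4$ with $j^2=-1$ (indeed $\chi_c(j)=-1$, $\chi_h(j)=0$, and the defining relations are respected; note all elements of $\Pin(2)\setminus U(1)$ are conjugate). Applying $\id_{R(\Gamma)}\otimes\ev_j$ to $\alpha(\SW_X)=\alpha_0-\widetilde\alpha_0 c+\sum_{k\ge1}\alpha_k h^k$ annihilates every $h^k$-term and turns $-\widetilde\alpha_0 c$ into $+\widetilde\alpha_0$, so
\[
\alpha_0(X)+\widetilde\alpha_0(X)=(\id_{R(\Gamma)}\otimes\ev_j)\,\alpha(\SW_X)\in R(\Gamma).
\]
Hence it suffices to compute the ordinary character $\gamma\mapsto\chi_{\alpha(\SW_X)}(\gamma,j)$ on $\Gamma$, i.e.\ to evaluate the degree at the elements $(\gamma,j)\in\Gamma\times\Pin(2)$, and then reassemble the resulting class function into an element of $R(\Gamma)$.

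\emph{Step 2: a fixed-point formula for the degree.}
Represent $\SW_X$, after finite-dimensional approximation, by a $\Gamma\times\Pin(2)$-equivariant proper map $f\colon (V^{\R}\oplus V^{\HH})^+\to (W^{\R}\oplus W^{\HH})^+$, with $\Pin(2)$ acting on the form summands through $c$ and on the spinor summands through the quaternionic representation $h$; as a virtual representation $W\ominus V$ records the index data, its $\R$-part being essentially $H^+(X)\otimes c$ and its $\HH$-part $\ominus\,\ind_\Gamma D_{{\rm Spin}}(X)$. Because $X\to M$ is a principal $\Gamma$-bundle these index representations are multiplicative over the covering: $\ind_\Gamma D_{{\rm Spin}}(X)=2k\,[L^2(\Gamma)]$ and $\rho_{\rm triv}\oplus H^+(X)=(m+1)[L^2(\Gamma)]$ as $\Gamma$-representations. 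Now comes the point that uses \emph{oddness of $\#\Gamma$}: every $\gamma\in\Gamma$ has odd order and hence no eigenvalue $-1$, while $j$ acts fixed-point-freely on the $c$-summands (by $-1$) and on the $h$-summands (as a fixed-point-free quaternion); therefore $(\gamma,j)$ has no nonzero fixed vector on $V$ or $W$, the fixed-point map $f^{(\gamma,j)}$ is the identity of $S^0$, and the equivariant (tom Dieck/Atiyah--Segal) localization formula for the $K$-theoretic Bauer--Furuta degree collapses to
\[
\chi_{\alpha(\SW_X)}(\gamma,j)=\lambda_{-1}(W\ominus V)(\gamma,j)=\frac{\lambda_{-1}\big(H^+(X)\otimes c\big)(\gamma,j)}{\lambda_{-1}\big(\ind_\Gamma D_{{\rm Spin}}(X)\big)(\gamma,j)},
\]
up to a sign fixed by orientations.

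\emph{Step 3: evaluating the Euler classes and assembling.}
For $\gamma=1$ this is the non-equivariant computation on $X$: using $\lambda_{-1}(c)(j)=2$ and the quaternionic analogue on the spinor summand, together with $b^+(X)=(m+1)\#\Gamma-1$ and $\ind D_{{\rm Spin}}(X)=2k\#\Gamma$, one obtains $\chi_{\alpha(\SW_X)}(1,j)=2^{(m-2k+1)\#\Gamma-1}$. For $\gamma\neq1$ I would exploit freeness directly through the multiplicative formulas above: writing $\ind_\Gamma D_{{\rm Spin}}(X)$ and $\rho_{\rm triv}\oplus H^+(X)$ via $[L^2(\Gamma)]$ and evaluating the $\lambda_{-1}$-classes at $(\gamma,j)$ by factoring over the eigenvalues of $\gamma$ on $\C[\Gamma]$, the contributions of the augmentation part $[L^2(\Gamma)]\ominus\rho_{\rm triv}$ cancel --- the elementary identity $\prod_{l=1}^{n-1}(1+\zeta^l)=1$ for a primitive $n$-th root of unity $\zeta$ with $n$ odd is what produces the cancellation --- leaving the value $2^{m-2k}$ attached to the quotient. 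Finally, a virtual $\Gamma$-representation whose character equals $2^{(m-2k+1)\#\Gamma-1}$ at the identity and the constant $2^{m-2k}$ off the identity is necessarily
\[
2^{m-2k}\Big(\tfrac{2^{(\#\Gamma-1)(m-2k+1)}-1}{\#\Gamma}[L^2(\Gamma)]+\rho_{\rm triv}\Big),
\]
which is the asserted formula.

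The main obstacle is Step~2--3 in the case $\gamma\neq1$: one must build the finite-dimensional approximations compatibly with the covering, so that passing to $\gamma$-fixed subspaces genuinely yields the finite-dimensional model of the monopole map downstairs, and then verify that every normal $\lambda_{-1}$-correction factor at $(\gamma,j)$ is trivial; this is exactly where the multiplicativity of the Atiyah--Hitchin--Singer and spin Dirac indices under the free covering, and the oddness of $\#\Gamma$, are indispensable. A secondary technical point is that, since the $j$-action on the spinor factor is complex-antilinear, the $K$-theoretic degree and its localization formula must be read in the appropriate Real/quaternionic sense (as in Furuta's framework) for the Euler-class computations above to make sense.
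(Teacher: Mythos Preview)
Your proposal is essentially correct and follows the same overall arc as the paper: reduce to computing $\Tr_{(\gamma,J)}\alpha(\SW_X)$ for every $\gamma\in\Gamma$, show this equals $2^{m-2k}$ for $\gamma\neq e$ and $2^{(\#\Gamma)(m-2k+1)-1}$ for $\gamma=e$ using the elementary identity $\prod_{l=1}^{n-1}(1+\zeta^l)=1$ for odd $n$, and then reassemble via character theory. Your Step~1 and the final assembly in Step~3 are exactly what the paper does.

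The genuine difference is in how the character value at $(\gamma,J)$ for $\gamma\neq e$ is obtained. The paper does not invoke a localization/tom Dieck formula directly; instead it builds an explicit commutative diagram of finite-dimensional approximations relating $\SW_X$ to $\SW_M$ through the inclusions $\pi_i^*$, obtaining the identity $\alpha(\SW_X)\cdot e_0=e_1\cdot\alpha(\SW_M)$ in $R(\Gamma\times\Pin(2))$ with explicit Euler classes $e_0,e_1$ (this is the paper's Theorem~3.2). After factoring out a common nonvanishing coefficient at $(\gamma,J)$, one reads off $\Tr_{(\gamma,J)}\alpha(\SW_X)=\Tr_J\alpha(\SW_M)=2^{m-2k}$ from Furuta's formula on the quotient. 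Your route packages the same content as a single fixed-point computation: since $(\gamma,j)$ acts freely on both $V$ and $W$ (this is where you correctly use odd order on the $c$-summand), the degree of $f^{(\gamma,j)}\colon S^0\to S^0$ is $1$ and the character reduces to a ratio of $\lambda_{-1}$-classes, which you then evaluate with the same root-of-unity identity. What your approach buys is conceptual economy; what the paper's buys is that it never needs to justify a general localization formula for the Bauer--Furuta degree, only the elementary functoriality of the Thom isomorphism under linear inclusions. The ``main obstacle'' you flag in your last paragraph --- building the approximations compatibly with the covering so that the $\Gamma$-fixed model really is the downstairs monopole map --- is precisely the content of the paper's Theorem~3.2 and its proof via the Peter--Weyl decomposition of Lemma~3.1.
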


These are the first step to compute the degree of the equivariant SW map. Our project is to seek for a nice class of finite groups and
Spin $4$-manifolds
that allows us to compute the degree concretely.
This is the next subject of our study and will be developed later.

\section{Review of 
finite dimensional approximation of the SW map }\label{rev}
Let us recall the construction of the monopole map \cite{BF}.
Let $M$ be a Spin $4$-manifold, and 
 $S^\pm$ be the spinor bundles.
The SW map is given by
\begin{gather*}
\SW \colon i\Omega^1(M)\oplus \Gamma(S^+)\to i(\Omega^0\oplus\Omega^+)(M)\oplus \Gamma(S^-)\\
\SW(a,\phi)= (d^*a, d^+a + q(\phi), (D+a)(\phi)),
\end{gather*}
where
$D$ is the Dirac operator  and  
$q(\phi)$ is the trace-free part of $(\phi\otimes\phi^*)\in\Gamma(\End(S^+))$ that is identified with a self-dual two form via the Clifford multiplication.
Let $\bf V$ be the $L^2_k$-completion ($k\geq 4$) of the source of 
the SW map and $\bf W$ be the $L^2_{k-1}$-completion of the target.
The index of the linearized operator of the SW map is given by 
$$
\ind D_{\text{Spin}} - \ind D_{\rm AHS} 
=  - \frac{\sigma(M)}{16} - (1-b^1(M) + b^+(M)).
$$

Taking a finite dimensional approximation of $\SW: \bf V \to \bf W$,
 we have a properly $G$-equivariant  map
\cite{BF}
\begin{equation}\label{eq:f}
f\colon V\to W,
\end{equation}
where $V$, $W$ are some finite dimensional subspaces of 
the Sobolev spaces.
Let $\tilde{\R}$ be the one-dimensional real representation of $G$ on which $S^1\subset G$ acts trivially and $j$ acts by multiplication of $-1$.
Let $\HH$ be the quaternion representation via left multiplication.

We have identifications as $G$-modules, $V\cong \tilde{\R}^{m}\oplus\HH^{n+k}$ and $W\cong \tilde{\R}^{m+b} \oplus\HH^n$,
where $k=\ind_{\HH}D_{{\rm Spin}}=-\sigma(M)/16$  and $b=b_+(M)$.
We consider the complexification of $f$.
$$
f_c\colon \tilde{\C}^m\oplus \HH_c^{n+k}\to \tilde{\C}^{m+b}\oplus \HH_c^{n},
$$
where $\tilde{\C}=\tilde{\R}\otimes_{\R} \C$ and $\HH_c=\HH\otimes_{\R}\C = \HH\oplus\HH$. 
The map $f_c$ is also $G$-equivariant and proper.

Recall  some facts on the complex representation ring $R(G)$:
\begin{itemize}
\item $R(G)$ is generated by $c=[\tilde{\C}]$ and $h=[\HH]$ subject to the relations $c^2=1$ and $ch=h$.
More expplicitly 
\[
R(G) = \Z[h,c]/(c^2-1,ch-h)=\Z[h]\oplus \Z c.
\]
\item For $\beta\in R(G)$, let $\lambda_{-1}\beta$ denote the alternating sum $\sum(-1)^i\lambda^i\beta$ of exterior powers.
Then $\lambda_{-1}c=1-c$ and $\lambda_{-1} h = 2-h$.
\end{itemize}

Apply the $K_G$-functor
\[
f_c^*: K_G(\tilde{\C}^{m+b}\oplus \HH_c^{n})
\to K_G(\tilde{\C}^m\oplus \HH_c^{n+k}).
\]
Recall that 
for a complex representation space $H$ of $G$, 
$K_G(H)$ is a free $R(G)$-module  generated by the Bott element.
Hence, $f_c^*$ is represented by the multiplication of an element
$\alpha_f \in R(G)$ in the representation ring.

\begin{dfn}
The $K_G$-theoretic degree of $f: V \to W$ is given by
$\alpha_f \in R(G)$.
\end{dfn}

\begin{pro}\cite{Fur}
The $K_{\Pin(2)}$-theoretic degree of the SW map is 
given by
\[
\alpha(SW_M) = 
2^{b-2k-1}(1-c).
\]
\end{pro}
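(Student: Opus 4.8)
The plan is to read off $\alpha(SW_M)=\alpha_f\in R(G)=\Z[h]\oplus\Z c$ directly from the finite-dimensional approximation $f_c\colon \tilde{\C}^{m}\oplus\HH_c^{n+k}\to\tilde{\C}^{m+b}\oplus\HH_c^{n}$, by restricting to the fixed-point loci of the subgroups $S^1$ and $\langle j\rangle\cong\Z_4$ of $G=\Pin(2)$ and invoking the localization theorem in equivariant $K$-theory. (All representations occurring here are self-conjugate, so the Bott class of a complex $G$-module $E$ may be taken to be $\lambda_{-1}(E)$.) The starting observation is that $f$ respects $S^1$-fixed subspaces: the monopole map sends a reducible configuration $(a,0)$ to $(d^{*}a,d^{+}a,0)$, so on $V^{S^1}=\tilde{\R}^{m}$ the approximation $f$ is a perturbation of the injective linear map induced by $d^{*}\oplus d^{+}$, with cokernel $\tilde{\R}^{b}$. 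Complexifying, $f_c^{S^1}\colon\tilde{\C}^{m}\to\tilde{\C}^{m+b}$ is a proper $\Z_2$-map ($\Z_2=G/S^1$) homotopic to a linear injection, so $\alpha_{f^{S^1}}=\lambda_{-1}(\tilde{\C}^{b})=(1-c)^{b}=2^{b-1}(1-c)$, using $(1-c)^{2}=2(1-c)$.

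The mechanism I would use repeatedly is the following identity, valid in $R(H)$ for every subgroup $H\le G$ and every proper $G$-map respecting $H$-fixed sets:
\[
\lambda_{-1}(W_c/W_c^{H})\cdot\alpha_{f^{H}}=(\mathrm{res}_{H}\,\alpha_{f})\cdot\lambda_{-1}(V_c/V_c^{H}).
\]
It follows from a diagram chase of one-point compactifications, using that the restriction of a Bott class to the fixed-point sphere is the Euler class of the normal representation times the Bott class of the fixed part, together with $f_c\circ\iota_{V_c^{H}}=\iota_{W_c^{H}}\circ f_c^{H}$ for the inclusions $\iota$. Applying it with $H=S^1$, where $V_c^{S^1}=\tilde{\C}^{m}$, $W_c^{S^1}=\tilde{\C}^{m+b}$ and the normal parts are $\HH_c^{n+k}$ and $\HH_c^{n}$: the left-hand side contains $\alpha_{f^{S^1}}=(1-c)^{b}$, whose image in $R(S^1)$ is $(1-1)^{b}=0$; since $R(S^1)=\Z[t,t^{-1}]$ is an integral domain and $\lambda_{-1}(\HH_c)|_{S^1}=(2-t-t^{-1})^{2}\neq0$, we get $\mathrm{res}_{S^1}\,\alpha_{f}=0$. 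As $\ker\bigl(R(G)\to R(S^1)\bigr)=\Z\cdot(1-c)$ — under $h\mapsto t+t^{-1}$ the subring $\Z[h]$ is carried isomorphically onto the symmetric Laurent polynomials and $c\mapsto1$ — this yields $\alpha_{f}=a_{0}(1-c)$ for a single integer $a_{0}$.

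It remains to compute $a_{0}$, which I would do by applying the same identity with $H=\langle j\rangle\cong\Z_4$, writing $R(\Z_4)=\Z[s]/(s^{4}-1)$, under which $c\mapsto s^{2}$ and $h\mapsto s+s^{3}$. Since $j$ acts on $\tilde{\C}$ by $-1$ and on $\HH_c$ without nonzero fixed vectors, $V_c^{\langle j\rangle}=W_c^{\langle j\rangle}=0$, so $f^{\langle j\rangle}$ is the identity of a point and, after localizing $R(\Z_4)$ at the Euler classes of its nontrivial characters, the identity becomes
\[
\mathrm{res}_{\Z_4}\,\alpha_{f}=\frac{\lambda_{-1}(W_c)}{\lambda_{-1}(V_c)}\Big|_{\Z_4}=(1-s^{2})^{b}\bigl((1-s)(1-s^{3})\bigr)^{-2k}.
\]
Comparing with $\mathrm{res}_{\Z_4}\bigl(a_{0}(1-c)\bigr)=a_{0}(1-s^{2})$ and cancelling the unit $1-s^{2}$, I would evaluate at the two faithful characters $s\mapsto\pm i$ — the only components surviving the localization — where $1-s^{2}\mapsto2$ and $(1-s)(1-s^{3})\mapsto2$, obtaining $a_{0}\mapsto2^{b-1}\cdot2^{-2k}=2^{b-2k-1}$ in both. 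Hence $a_{0}=2^{b-2k-1}$ and $\alpha(SW_M)=2^{b-2k-1}(1-c)$.

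I expect the principal obstacle to be the careful justification of the fixed-point/localization identity: the treatment of one-point compactifications, the verification that the stabilized approximation $f$ really does restrict on the reducible locus to a linear injection with cokernel $\tilde{\R}^{b}$, and the invertibility of the moving-part Euler classes in the relevant localization of $R(\langle j\rangle)$ needed to legitimize the cancellations. Granting these, the representation-theoretic computations in the last two steps are routine.
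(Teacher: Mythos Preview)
The paper does not supply its own proof of this proposition; it is simply quoted from \cite{Fur} as background. There is therefore nothing in the paper to compare your argument against.

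That said, your sketch is correct and is essentially Furuta's original argument: reduce to the $S^1$-fixed locus to see that $\alpha_f$ lies in $\Z(1-c)$, then use the $\langle j\rangle$-localization to pin down the integer coefficient. Two small points worth tightening. First, when you invoke the fixed-point identity at $H=S^1$, the degree $\alpha_{f^{S^1}}$ should already be regarded in $R(S^1)$; your detour through $R(\Z_2)=R(G/S^1)$ is fine because $S^1$ acts trivially on the fixed locus, but you might as well note directly that the $S^1$-Euler class of a positive-dimensional trivial $S^1$-module is zero (here you use $b=b^+(M)>0$, which the paper assumes). Second, your evaluation at $s=\pm i$ is exactly the right way to read off $a_0$ once you have localized away from the non-faithful characters of $\Z_4$; it may help the reader to remark that $(1-s)(1-s^3)=2-s-s^3=2-h$ in $R(\Z_4)$, so the passage from $\lambda_{-1}h=2-h$ to the value $2$ at $s=\pm i$ is immediate. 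The caveats you flag at the end (properness and linear behaviour of the approximation on the reducible locus, invertibility of the moving-part Euler classes after localization) are the genuine analytic/topological inputs and are handled in \cite{Fur} and \cite{BF}.
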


\section{Finite-coverings}

Let $X$ be a Spin $4$-manifold equipped with a free Spin-action of a finite group $\Gamma$. We denote the orbit space $X/\Gamma$ by $M$. The Seiberg-Witten map on each manifold is denoted by $\SW_X$ and $\SW_M$.
We would like to connect the $K_{\Pin(2)}$-degree of $\SW_X$ and that of $\SW_M$ by using the following commutative diagram:
$$\begin{CD}
C^\infty(X,S_0\oplus \wedge^1) @>\SW_X>> C^\infty(X,S_1\oplus \wedge^0\oplus \wedge^+) \\
@A\pi^{*}_{0}AA @AA\pi^{*}_{1}A \\
C^\infty(M,S_0\oplus \wedge^1) @>\SW_M>> C^\infty(M,S_1\oplus \wedge^0\oplus \wedge^+) 
\end{CD}$$
Note that $\pi^{*}_{0}$ and $\pi^{*}_{1}$ are linear embeddings, and hence they are proper.

With a similar technique of the Peter-Weyl theorem for finite groups, we have the following isomorphism.

\begin{lem}\label{lem noncommutative Fourier}
For an arbitrary $\Gamma$-equivariant Hermite vector bundle $E$ of $X$, $C^\infty(X,E)$ can be decomposed as
$$\bigoplus_{\lambda\in\widehat{\Gamma}}V_\lambda\otimes C^\infty(M,X\times_\Gamma (E\otimes V_\lambda^*))$$
as a representation of $\Gamma$, where $V_\lambda$ is the representation space of the irreducible representation $\lambda$, and $V^*_\lambda$ is the dual representation space of $\lambda$. The same formula holds for not only $C^\infty$, but also other types of function spaces including $C^0$, $L^2$ and so on.
\end{lem}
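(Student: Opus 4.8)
The plan is to run the Peter--Weyl decomposition for the finite group $\Gamma$ together with descent along the free covering $\pi\colon X\to M$. Since $\Gamma$ is finite and acts on $C^\infty(X,E)$ by bundle automorphisms covering isometries of $X$ (the action is Spin, hence metric-preserving), the operators
\[
p_\lambda:=\frac{\dim V_\lambda}{\#\Gamma}\sum_{g\in\Gamma}\overline{\chi_\lambda(g)}\,g
\]
act continuously on $C^\infty(X,E)$ and satisfy $p_\lambda p_\mu=\delta_{\lambda\mu}p_\lambda$ and $\sum_{\lambda\in\widehat{\Gamma}}p_\lambda=\id$. Hence $C^\infty(X,E)=\bigoplus_{\lambda\in\widehat{\Gamma}}p_\lambda C^\infty(X,E)$ is a genuine (finite) direct sum into $\lambda$-isotypic summands, and the evaluation map $V_\lambda\otimes\Hom_\Gamma(V_\lambda,C^\infty(X,E))\to p_\lambda C^\infty(X,E)$, $v\otimes f\mapsto f(v)$, is a $\Gamma$-equivariant isomorphism, with $\Gamma$ acting on the left-hand side through $V_\lambda$ only. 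The very same argument works with $C^\infty$ replaced by $C^0$, $L^2$, or the Sobolev completions $L^2_k$, since it uses only continuity of the $p_\lambda$.

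It then remains to identify the multiplicity spaces $\Hom_\Gamma(V_\lambda,C^\infty(X,E))$. First I would rewrite
\[
\Hom_\Gamma\bigl(V_\lambda,C^\infty(X,E)\bigr)=\bigl(V_\lambda^*\otimes C^\infty(X,E)\bigr)^\Gamma=C^\infty\bigl(X,E\otimes\underline{V_\lambda^*}\bigr)^\Gamma,
\]
where $\underline{V_\lambda^*}=X\times V_\lambda^*$ is the trivial bundle equipped with the diagonal $\Gamma$-action, so that $E\otimes\underline{V_\lambda^*}$ is again a $\Gamma$-equivariant Hermitian bundle on $X$. Then I would invoke descent: if $\Gamma$ acts freely on $X$, then for any $\Gamma$-equivariant vector bundle $F\to X$ the quotient $F/\Gamma\to X/\Gamma=M$ is a vector bundle and $\pi^*$ induces an isomorphism $C^\infty(M,F/\Gamma)\xrightarrow{\ \sim\ }C^\infty(X,F)^\Gamma$, with inverse given by fibrewise averaging over $\pi^{-1}(\cdot)$; the same holds for $C^0$, $L^2$, $L^2_k$. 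Applying this to $F=E\otimes\underline{V_\lambda^*}$, whose quotient is precisely $X\times_\Gamma(E\otimes V_\lambda^*)$ in the notation of the statement, gives $\Hom_\Gamma(V_\lambda,C^\infty(X,E))\cong C^\infty(M,X\times_\Gamma(E\otimes V_\lambda^*))$. Substituting this into the isotypic decomposition and summing over $\lambda\in\widehat{\Gamma}$ yields the claim, uniformly across the listed function spaces because every step uses only continuity of the $p_\lambda$ and the descent isomorphism.

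I do not expect a serious analytic obstacle: the descent isomorphism $C^{\sharp}(X,F)^\Gamma\cong C^{\sharp}(M,F/\Gamma)$ for $\sharp\in\{\infty,0,2,k\}$ is classical for free quotients by finite groups. The real care needed is bookkeeping of the equivariant structures: one must check that it is $V_\lambda$ (and not $V_\lambda^*$) that survives on the left, that $\Gamma$ acts trivially on the factors $C^\infty(M,\cdot)$ once one has passed to the quotient, and that the resulting isomorphism is natural in $E$. This last point, naturality in $E$, is what will later allow the lemma to be applied simultaneously to the spinor and form bundles occurring in $\SW_X$ and compared, via $\pi^{*}_{0}$ and $\pi^{*}_{1}$, with the finite-dimensional approximation.
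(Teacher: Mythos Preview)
Your argument is correct. Both your proof and the paper's rest on the same two ingredients---Peter--Weyl for the finite group $\Gamma$ and descent along the free quotient $\pi\colon X\to M$---but you apply them in the opposite order. You first split $C^\infty(X,E)$ into isotypic pieces via the central idempotents $p_\lambda$, identify each multiplicity space as $C^\infty(X,E\otimes\underline{V_\lambda^*})^\Gamma$, and then descend. The paper instead first builds a single explicit isomorphism $\Phi\colon C^\infty(X,E)\xrightarrow{\sim} C^\infty\bigl(M,X\times_\Gamma(E\otimes l^2(\Gamma))\bigr)$, given by $\Phi(s)(m)=[\gamma\mapsto\gamma^{-1}s(\gamma\widetilde m)]$, and only afterwards decomposes the fiber $l^2(\Gamma)\cong\bigoplus_\lambda V_\lambda\otimes V_\lambda^*$. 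Your route is the more modular and textbook one, and makes the $\Gamma$-equivariance and naturality in $E$ transparent; the paper's route has the virtue of writing down the isomorphism by an explicit formula, which is convenient when one later wants to track how the map interacts with differential operators such as $D_{\rm Spin}$ and $D_{\rm AHS}$.
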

\begin{proof}
We construct an isomorphism and sketch the proof.
First, we define a $\Gamma$-equivariant vector bundle $X\times_\Gamma (E\otimes l^2(\Gamma))$ over $M$. It is the quotient space of $E\otimes l^2(\Gamma)$
by the $\Gamma$-action
$$(\Gamma\text{-action on }E )\otimes(\text{right regular representation}).$$
The fiber at $m\in M$ can be identified with $l^2(\Gamma, E_{\widetilde{m}})$, where $\widetilde{m}\in X$ is a chosen lift of $m$. An element of the fiber $\phi \in l^2(\Gamma, E_{\widetilde{m}})$ is identified with 
$\psi \in l^2(\Gamma, E_{\gamma_0\widetilde{m}})$ if the equation
$$\gamma_0\cdot \phi(\gamma\gamma_0)=\psi(\gamma)$$
holds. The $\Gamma$-action on $X\times_\Gamma (E\otimes l^2(\Gamma))$ is defined by
$$\id_E \otimes (\text{left regular representation}).$$
Notice that the $\Gamma$-action on $M$ is trivial.

Next, we construct an isomorphism $\Phi$ between $C^\infty(X,E)$ and $C^\infty(M,X\times_\Gamma (E\otimes l^2(\Gamma)))$.
For $s\in C^\infty(X,E)$, we define $\Phi(s)\in C^\infty(M,X\times_\Gamma (E\otimes l^2(\Gamma)))$ by
$$\Phi(s)(m):=\Bigl[\Gamma\ni \gamma\mapsto \gamma^{-1}s(\gamma\widetilde{m})\in E|_{\widetilde{m}}
\Bigr].$$
One can easily prove that $\Phi$ is well-defined and it is $\Gamma$-equivariantly isomorphic.

Let us construct an isomorphism between $C^\infty(X,E)$ and $\bigoplus_{\lambda\in\widehat{\Gamma}}V_\lambda\otimes C^\infty(M,X\times_\Gamma (E\otimes V_\lambda^*))$. By the Peter-Weyl theorem, $l^2(\Gamma)\cong \bigoplus_{\lambda\in\widehat{\Gamma}}V_\lambda\otimes V_\lambda^*$. Since the right regular representation acts on the $V_\lambda$-factor trivially, we have an isomorphism
$$C^\infty(M,X\times_\Gamma (E\otimes l^2(\Gamma)))\cong\bigoplus_{\lambda\in\widehat{\Gamma}}V_\lambda\otimes C^\infty(M,X\times_\Gamma (E\otimes V_\lambda^*)).$$
Since the left regular representation corresponds to $\rho_\lambda\otimes \id$ on the summand $V_\lambda\otimes V_\lambda^*$, the above isomorphism is equivariant.

\end{proof}

We work on the representation ring $R(\Gamma\times {\rm Pin}(2))\cong R(\Gamma)\otimes R({\rm Pin}(2))$. In the representation ring, we write the representation $V_\lambda$ as $\lambda$. 
We use additive notation for direct sum, multiplicative notation for interior tensor product of representations, and tensor product notation for exterior tensor product of representations (a tensor product of representations of different groups).
Recall that $R({\rm Pin}(2))$ is generated by $h$ and $c$. We denote the trivial representation by $\rho_{\rm triv}$.

\begin{thm}\label{pro SW of X VS SW of M}
$\alpha(\SW_M)$ and $\alpha(\SW_X)$ satisfy the following equation in $R(\Gamma\times {\rm Pin}(2))$.
\begin{align*}
&\prod_{\lambda\neq \rho_{\rm triv}}\bigwedge^*\bra{\lambda\otimes\bra{2N_\lambda\cdot \dim( V_\lambda^*)h+ M_\lambda\cdot \dim( V_\lambda^*)c}} \\
& \qquad  \qquad \qquad 
\times \bra{\alpha(\SW_X)\cdot \prod_{\lambda\neq \rho_{\rm triv}}\bigwedge^*\bra{V_\lambda\otimes\bra{\ind(D_{\rm Spin}(M))\cdot \dim( V_\lambda^*)h}}
}\\
& = \ \prod_{\lambda\neq \rho_{\rm triv}}\bigwedge^*\bra{\lambda\otimes\bra{2N_\lambda\cdot \dim( V_\lambda^*)h+ M_\lambda\cdot \dim( V_\lambda^*)c}} \\
& \qquad  \qquad \qquad 
\times \bra{\prod_{\lambda\neq \rho_{\rm triv}}\bigwedge^*\bra{\lambda\otimes\bra{ \ind(D_{\rm AHS}(M))\cdot \dim( V_\lambda^*)c}}\cdot \alpha(\SW_M)},
\end{align*}
where $N_\lambda$ and $M_\lambda$ are natural numbers appearing in the definition of finite-dimensional approximations.
\end{thm}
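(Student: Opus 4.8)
The plan is to read the identity off the commutative square relating $\SW_X$ and $\SW_M$, by applying the $K_{\Gamma\times\Pin(2)}$-functor and bookkeeping Euler classes.

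First I would use Lemma~\ref{lem noncommutative Fourier}, applied to $E=S^+\oplus\wedge^1$ and to $E=S^-\oplus\wedge^0\oplus\wedge^+$, to split the Sobolev completions of the source and target of $\SW_X$ into their $\widehat\Gamma$-isotypic summands. The $\rho_{\rm triv}$-summands are exactly the images of $\pi_0^*$ and $\pi_1^*$, and the \emph{linear} part of $\SW_X$ is block-diagonal for this decomposition: on the $\lambda$-summand it is $\id_{V_\lambda}$ tensored with the Dirac-plus-AHS operator of $M$ twisted by the flat Hermitian bundle $X\times_\Gamma V_\lambda^*$. I would then choose a Bauer-Furuta finite-dimensional approximation \cite{BF} $f_X\colon V_X^{\alg}\to W_X^{\alg}$ of $\SW_X$ that is $\Gamma$-invariant, split along the isotypic decomposition, and whose $\rho_{\rm triv}$-part is a finite-dimensional approximation $f_M\colon V_M^{\alg}\to W_M^{\alg}$ of $\SW_M$; this is possible because the linear part is block-diagonal, so the construction over $M$ propagates to $X$. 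Writing $V_X^{\alg}=V_M^{\alg}\oplus V'$ and $W_X^{\alg}=W_M^{\alg}\oplus W'$ with $V',W'$ the sums of the non-trivial isotypic parts, the square restricts to $f_X\circ g=g'\circ f_M$, where $g,g'$ are the proper linear inclusions of the $\rho_{\rm triv}$-parts.

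Next I would identify $V'$ and $W'$, after complexification, as explicit $\Gamma\times\Pin(2)$-representations. Because a flat bundle has trivial Chern character, the twisted Dirac and AHS indices on $M$ with coefficients in $X\times_\Gamma V_\lambda^*$ are $\dim(V_\lambda^*)\,\ind(D_{\rm Spin}(M))$ and $\dim(V_\lambda^*)\,\ind(D_{\rm AHS}(M))$; combining this with the $N_\lambda$ quaternionic and $M_\lambda$ real free summands built into the approximation yields $\lambda_{-1}(V'_c)=P\cdot Q_X$ and $\lambda_{-1}(W'_c)=P\cdot Q_M$, where $P$, $Q_X$, $Q_M$ are the three products appearing in the statement. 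Finally I would apply $K_{\Gamma\times\Pin(2)}$ to $f_X\circ g=g'\circ f_M$ after complexifying, giving $g^*\circ f_X^*=f_M^*\circ g'^*$. Evaluating on the Bott element of $W_X^{\alg}$, and using that each $f^*$ is multiplication by the corresponding degree, that $g^*$ carries the Bott element of $V_X^{\alg}$ to $\lambda_{-1}(V'_c)$ times that of $V_M^{\alg}$ (the $K$-theoretic Euler class being the restriction of the Thom class to the zero section), and that $K_{\Gamma\times\Pin(2)}(V_{M,c}^{\alg})$ is free of rank one over $R(\Gamma\times\Pin(2))$, one gets $\alpha(\SW_X)\cdot\lambda_{-1}(V'_c)=\lambda_{-1}(W'_c)\cdot\alpha(\SW_M)$; substituting the formulas for $\lambda_{-1}(V'_c)$ and $\lambda_{-1}(W'_c)$ gives the claim. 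Note that $P$ cannot be cancelled, since $R(\Gamma\times\Pin(2))$ has zero divisors — that is exactly why it appears as a common factor on both sides.

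The hard part will be the middle steps: making the compatible choice of finite-dimensional approximations rigorous (and checking that $f_X$ so chosen still computes $\alpha(\SW_X)$, i.e.\ that the degree is independent of the choices), and carrying out the representation-theoretic identification of $V'_c$ and $W'_c$. The latter is delicate because the quaternionic structure on $S^\pm$ intertwines a non-self-conjugate irreducible $\lambda$ with its conjugate $\bar\lambda$, so one must work with the real isotypic decomposition and complexify only at the end; this is the source of the factor $2N_\lambda$ and of the precise form of the index contributions. Once those two points are settled, the $K$-theory computation in the last step is purely formal.
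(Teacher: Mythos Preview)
Your proposal is correct and follows essentially the same route as the paper: use Lemma~\ref{lem noncommutative Fourier} to split the source and target of $\SW_X$ into isotypic pieces, take a $\Gamma$-compatible finite-dimensional approximation whose $\rho_{\rm triv}$-part is an approximation of $\SW_M$, identify the complexified non-trivial summands via the flat-bundle index theorem (yielding the $N_\lambda,M_\lambda$ and the $\ind(D_{\rm Spin}(M))$, $\ind(D_{\rm AHS}(M))$ contributions), and then read off the Euler-class identity $\alpha(\SW_X)\cdot e_0=e_1\cdot\alpha(\SW_M)$ from the commutative $K_G$-square before factoring out the common factor $P$. Your caveats about the compatible choice of approximations and the quaternionic pairing of $\lambda$ with $\bar\lambda$ are exactly the bookkeeping the paper handles implicitly; no further idea is needed.
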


\begin{proof}
Let us first outline the idea of the proof. We will justify this formal argument by using finite-dimensional approximations. 

We denote $\Gamma\times {\rm Pin}(2)$ by $G$.
By the above commutative diagram and the Thom isomorphisim, we obtain the following diagram on topological $K$-theory:
\begin{equation}\label{eqn comm diagram}
\xymatrix{
R(G) \ar_{e_0}[ddd] \ar^{\cong}[rd]&&&
R(G) \ar_{\alpha(\SW_X)}[lll] \ar^{e_1}[ddd] \ar_{\cong}[ld]\\
&K^0_{G}(C^\infty(X,S_0\oplus \wedge^1))\ar_{(\pi_0^*)^*}[d] & 
K^0_{G}(C^\infty(X,S_1\oplus \wedge^0\oplus \wedge^+)) \ar_{\SW_X^*}[l] \ar^{(\pi_1^*)^*}[d] \\
&K^0_{G}(C^\infty(M,S_0\oplus \wedge^1)) &
K^0_{G}(C^\infty(M,S_1\oplus \wedge^0\oplus \wedge^+) ) \ar^{\SW_M^*}[l] \\
R(G) \ar^{\cong}[ru]&&&
R(G) \ar^{\alpha(\SW_M)}[lll] \ar_{\cong}[lu]},
\end{equation}
where $e_0$ and $e_1$ are the Euler classes corresponding to the orthogonal projections associated to the embeddings 
$\pi_0^*$ and $\pi_1^*$, respectively.
We can compute them thanks to Lemma \ref{lem noncommutative Fourier}. Formally, they can be written as
\begin{center}
$e_0=$``$\bigotimes_{\lambda\neq \rho_{\rm triv}}\bigwedge^* V_\lambda\otimes C^\infty(M,X\times_\Gamma(S_0\oplus \wedge^1)\otimes V_\lambda^*)$'', and

$e_1=$``$\bigotimes_{\lambda\neq \rho_{\rm triv}}\bigwedge^* V_\lambda\otimes C^\infty(M,X\times_\Gamma(S_1\oplus \wedge^0\oplus \wedge^+)\otimes V_\lambda^*)$''.
\end{center}
By the commutativity of the diagram, we obtain a formula
$\alpha(\SW_X)\cdot e_0=e_1\cdot \alpha(\SW_M)$. Dividing it by $e_0$, we obtain
$$\alpha(\SW_X)=e_0^{-1}\cdot e_1\cdot \alpha(\SW_M)$$
By the formula on the Euler classes $e(V\oplus W)=e(V)\cdot e(W)$, the coefficient $e_0^{-1}\cdot e_1$ should be
\begin{align*}
& \text{``}\bigotimes_{\lambda\neq \rho_{\rm triv}}\bigwedge^* V_\lambda\otimes \bbra{C^\infty(M,X\times_\Gamma(S_1\oplus \wedge^0\oplus \wedge^+)\otimes V_\lambda^*)
-C^\infty(M,X\times_\Gamma(S_0\oplus \wedge^1)\otimes V_\lambda^*)}\text{''} \\
& 
=\text{``}\bigotimes_{\lambda\neq \rho_{\rm triv}}\bigwedge^* V_\lambda\otimes
 (\ind(D_{\rm Spin}:C^\infty(S_0\otimes V_\lambda^*)
 \to C^\infty(S_1\otimes V_\lambda^*)) \\
& \qquad  \qquad \qquad 
-\ind(D_{\rm AHS}:C^\infty((\wedge^0\oplus \wedge^+)\otimes V_\lambda^*)\to 
C^\infty(\wedge^1\otimes V_\lambda^*)) )\text{''},
\end{align*}
where $\ind(\cdots)$ is an element of $R(G)$. Moreover, since $V_\lambda^*$ is flat, the index can be computed by the operators without coefficients. Strictly speaking, we cannot divide the equation by $e_0$. What we can do is to factorize the equation, and ``$e_0^{-1}\cdot e_1$'' will essentially appear as a coefficient.

Let us justify this infinite-dimensional argument. 
Let us consider an even-dimensional approximation of the above diagram following \cite{Fur}. We complexify all of section spaces in order to use the Bott periodicity theorem. This operation preserves the decomposition given in Lemma \ref{lem noncommutative Fourier}.

We use the following notations:
\begin{itemize}
\item $U(X):=$ a finite-dimensional approximation of $C^\infty(X,S_0\otimes_{\bb{R}} \bb{C})$,
\item $W(X):=$ a finite-dimensional approximation of $C^\infty(X,\wedge^1\otimes_{\bb{R}} \bb{C})$,
\item $U'(X):=$ a finite-dimensional approximation of $C^\infty(X,S_1\otimes_{\bb{R}} \bb{C})$, and 
\item $W'(X):=$ a finite-dimensional approximation of $C^\infty(X,(\wedge^0\oplus \wedge^+)\otimes_{\bb{R}} \bb{C})$,
\item $U_\lambda(X):=$ a finite-dimensional approximation of $C^\infty(M,X\times_\Gamma(S_0\otimes_{\bb{R}} \bb{C}\otimes_{\bb{C}} V_\lambda^*))$,
\item $W_\lambda(X):=$ a finite-dimensional approximation of $C^\infty(M,X\times_\Gamma(\wedge^1\otimes_{\bb{R}} \bb{C}\otimes_{\bb{C}} V_\lambda^*))$,
\item $U_\lambda'(X):=$ a finite-dimensional approximation of $C^\infty(M,X\times_\Gamma(S_1\otimes_{\bb{R}} \bb{C}\otimes_{\bb{C}} V_\lambda^*))$,
\item $W_\lambda'(X):=$ a finite-dimensional approximation of $C^\infty(M,X\times_\Gamma((\wedge^0\oplus \wedge^+)\otimes_{\bb{R}} \bb{C}\otimes_{\bb{C}} V_\lambda^*))$,

\item $U(M):=$ a finite-dimensional approximation of $C^\infty(M,S_0\otimes_{\bb{R}} \bb{C})$,
\item $W(M):=$ a finite-dimensional approximation of $C^\infty(M,\wedge^1\otimes_{\bb{R}} \bb{C})$,
\item $U'(M):=$ a finite-dimensional approximation of $C^\infty(M,S_1\otimes_{\bb{R}} \bb{C})$, and 
\item $W'(M):=$ a finite-dimensional approximation of $C^\infty(M,(\wedge^0\oplus \wedge^+)\otimes_{\bb{R}} \bb{C})$.
\end{itemize}
Since each section on $M$ corresponds to a $\Gamma$-invariant section on $X$, we have a natural isomorphism $U(M)=U_{\rho_{\rm triv}}(X)$ and so on.
By Lemma \ref{lem noncommutative Fourier}, $U(X)=\oplus_{\lambda\in \widehat{\Gamma}}V_\lambda\otimes U_\lambda(X)$ and so on.

The finite-dimensional approximation of the complexification of $\SW_X$ and $\SW_M$ is denoted by $F_X$ and $F_M$, respectively.
Thus the diagram (\ref{eqn comm diagram}) is approximated by the following commutative diagram:
$$
\xymatrix{
R(G) \ar_{e_0}[ddd] \ar^{\cong}[rd]&&&
R(G) \ar_{\alpha(F_X)}[lll] \ar^{e_1}[ddd] \ar_{\cong}[ld]\\
&K^0_{G}\bra{\bigoplus_{\lambda\in \widehat{\Gamma}}V_\lambda\otimes \bbra{U_\lambda(X)\oplus W_\lambda(X)}} \ar_{(\pi_0^*)^*}[d] & 
K^0_{G}\bra{\bigoplus_{\lambda\in \widehat{\Gamma}}V_\lambda\otimes \bbra{U'_\lambda(X)\oplus W'_\lambda(X)}} \ar_{F_X^*}[l] \ar^{(\pi_1^*)^*}[d] \\
&K^0_{G}(U_{\rho_{\rm triv}}(X)\oplus W_{\rho_{\rm triv}}(X)) &
K^0_{G}(U'_{\rho_{\rm triv}}(X)\oplus W'_{\rho_{\rm triv}}(X)) \ar^{F_M^*}[l] \\
R(G) \ar^{\cong}[ru]&&&
R(G). \ar^{\alpha(F_M)}[lll] \ar_{\cong}[lu]}
$$

Since the above approximations are given by the spectral decomposition of the Dirac operators, there are natural numbers $N_\lambda,M_\lambda$,
$$U_\lambda(X)\cong \bb{H}_c^{N_\lambda+\frac{1}{2}\ind(D_{\rm Spin}(M)\otimes V_\lambda^*)},\ \ \ 
W_\lambda(X)\cong \widetilde{\bb{C}}^{M_\lambda},$$
$$U'_\lambda(X)\cong \bb{H}_c^{N_\lambda},\ \ \ 
W'_\lambda(X)\cong \widetilde{\bb{C}}^{M_\lambda+\ind(D_{\rm AHS}(M)\otimes V_\lambda^*)},$$
where $\widetilde{\bb{C}}$ is the representation space of ${\rm Pin}(2)$ given by $e^{i\theta}j^{n}\mapsto (-1)^n$ for $e^{i\theta}\in S^1$ and $n\in \bb{Z}_2$, and $\bb{H}_c$ is the complexification of the natural representation of ${\rm Pin}(2)\subseteq SU(1)\subseteq \bb{H}$. 
Note the following facts.
\begin{itemize}
\item $\ind(D_{\rm Spin}(X))$ is even because $D_{\rm Spin}$ is $\bb{H}$-linear.
\item Since $V_\lambda^*$ is a flat bundle, thanks to the index theorem, $\ind(D_{\rm Spin}(M)\otimes V_\lambda^*)=D_{\rm Spin}(M)\cdot \dim(V_\lambda^*)$.
\item For the same reason, $\ind(D_{\rm AHS}(M)\otimes V_\lambda^*)=D_{\rm AHS}(M)\cdot \dim(V_\lambda^*)$.
\end{itemize}
Thus $e_0$ and $e_1$ are given by
$$e_0=\prod_{\lambda\neq \rho_{\rm triv}}\bigwedge^*\bra{\lambda\otimes\bra{\bbra{2N_\lambda+\ind(D_{\rm Spin}(M))\cdot \dim( V_\lambda^*)}h+M_\lambda c}},$$
$$e_1=\prod_{\lambda\neq \rho_{\rm triv}}\bigwedge^*\bra{\lambda\otimes\bra{2N_\lambda h+\bbra{M_\lambda+\ind(D_{\rm AHS}(M))\cdot \dim( V_\lambda^*)}c}}.$$


Thanks to these formulas, 

\begin{align*}
&\alpha(\SW_X)\cdot e_0-e_1\cdot \alpha(\SW_M)\\
&=\alpha(\SW_X)\cdot\prod_{\lambda\neq \rho_{\rm triv}}\bigwedge^*\bra{\lambda\otimes\bra{\bbra{2N_\lambda+\ind(D_{\rm Spin}(M))\cdot \dim( V_\lambda^*)}h+M_\lambda c}}\\
& \qquad  \qquad \qquad 
 -\prod_{\lambda\neq \rho_{\rm triv}}\bigwedge^*\bra{\lambda\otimes\bra{2N_\lambda h+\bbra{M_\lambda+\ind(D_{\rm AHS}(M))\cdot \dim( V_\lambda^*)}c}}\cdot \alpha(\SW_M) \\
&=\prod_{\lambda\neq \rho_{\rm triv}}\bigwedge^*\bra{\lambda\otimes\bra{2N_\lambda h+M_\lambda c}}
\times   \biggl(  \alpha(\SW_X)\cdot \bbra{\prod_{\lambda\neq \rho_{\rm triv}}\bigwedge^*\bra{\lambda\otimes\bra{\bbra{\ind(D_{\rm Spin}(M))\cdot \dim( V_\lambda^*)}h}}} \\
& \qquad  \qquad \qquad 
-\bbra{\prod_{\lambda\neq \rho_{\rm triv}}\bigwedge^*\bra{\lambda\otimes\bra{ \ind(D_{\rm AHS}(M))\cdot \dim( V_\lambda^*) c}}}\cdot \alpha(\SW_M) \biggr).
\end{align*}

Since $\alpha(\SW_X)\cdot e_0-e_1\cdot \alpha(\SW_M)=0$, the most right hand side is $0$.
\end{proof}

In order to deduce nontrivial results, we impose a condition on $\Gamma$.

\begin{asm}
The order of $\Gamma$ is odd.
\end{asm}

Thanks to this assumption, the common coefficient 
\[
\prod_{\lambda\neq \rho_{\rm triv}}\bigwedge^*\bra{\lambda\otimes\bra{2N_\lambda\cdot \dim( V_\lambda^*)h+ M_\lambda\cdot \dim( V_\lambda^*)c}}
\]
 does not vanish in the following sense. 

\begin{lem}
For any $\gamma\in \Gamma$, $\Tr_{(\gamma,J)}\bra{\prod_{\lambda\neq \rho_{\rm triv}}\bigwedge^*\bra{\lambda\otimes\bra{2N_\lambda\cdot \dim( V_\lambda^*)h+ M_\lambda\cdot \dim( V_\lambda^*)c}}}\neq 0$.
\end{lem}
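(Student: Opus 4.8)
The plan is to evaluate the relevant virtual character at the element $(\gamma, J) \in \Gamma \times \Pin(2)$ and show it is a product of nonzero complex numbers. First I would recall that for a $G$-representation $\beta$, the character of $\bigwedge^* \beta = \lambda_{-1}\beta$ at a group element $g$ is $\det(1 - g|_\beta)$, a standard identity. Applying this, the trace in question factors as
\[
\Tr_{(\gamma,J)}\bra{\prod_{\lambda\neq \rho_{\rm triv}}\bigwedge^*\bra{\lambda\otimes\bra{2N_\lambda\cdot \dim( V_\lambda^*)h+ M_\lambda\cdot \dim( V_\lambda^*)c}}}
= \prod_{\lambda\neq \rho_{\rm triv}} \det\!\bra{1 - (\gamma,J)\big|_{\lambda\otimes(2N_\lambda d_\lambda h + M_\lambda d_\lambda c)}},
\]
where I abbreviate $d_\lambda = \dim V_\lambda^*$. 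So it suffices to show that for each nontrivial $\lambda$, the operator $(\gamma, J)$ acting on $V_\lambda \otimes (2N_\lambda d_\lambda \HH_c \oplus M_\lambda d_\lambda \widetilde{\C})$ does not have $1$ as an eigenvalue.

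The key point is to analyze the $\Pin(2)$-action of $J$ on $\HH_c$ and on $\widetilde{\C}$. On $\widetilde{\C}$, $J$ acts by $-1$, so $(\gamma, J)$ acts on $V_\lambda \otimes \widetilde{\C}^{M_\lambda d_\lambda}$ as $-\rho_\lambda(\gamma) \otimes \id$; since $-1$ times a unitary never has eigenvalue $1$ unless $\rho_\lambda(\gamma)$ has eigenvalue $-1$ — wait, that can happen, so I must be more careful. Here is where the oddness of $|\Gamma|$ enters: since $\gamma$ has odd order, every eigenvalue of $\rho_\lambda(\gamma)$ is an odd-order root of unity, hence $\neq -1$; therefore $-\rho_\lambda(\gamma)$ has no eigenvalue $1$, so $\det(1 - (\gamma,J)|_{V_\lambda\otimes\widetilde{\C}}) \neq 0$. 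For $\HH_c = \HH \oplus \HH$: on $\HH$ the element $J = j$ acts by right (or left) quaternionic multiplication by $j$, which squares to $-1$, so its eigenvalues on $\HH_c$ are $\pm i$; thus $(\gamma,J)$ acts on $V_\lambda \otimes \HH_c$ with eigenvalues of the form $\zeta \cdot (\pm i)$ where $\zeta$ is an odd-order root of unity, and $\zeta(\pm i) = 1$ would force $\zeta = \mp i$, impossible for an odd-order root of unity. Hence $\det(1 - (\gamma,J)|_{V_\lambda \otimes \HH_c}) \neq 0$ as well. Combining, each factor in the product is nonzero, so the whole trace is nonzero.

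I would then note that the case $N_\lambda = 0$ or $M_\lambda = 0$ causes no trouble: an empty tensor factor just contributes $1$, and in any event the relevant determinant factors as a power, $\det(1 - (\gamma,J)|_{V_\lambda \otimes \HH_c})^{2N_\lambda d_\lambda} \cdot \det(1-(\gamma,J)|_{V_\lambda\otimes\widetilde{\C}})^{M_\lambda d_\lambda}$, and a nonzero number raised to a nonnegative integer power is nonzero. The main obstacle — or rather the one place where the hypothesis is genuinely used and must be handled cleanly — is the eigenvalue bookkeeping for $J$: one must pin down precisely how $j \in \Pin(2)$ acts on $\HH_c$ and on $\widetilde{\C}$ (eigenvalues $\pm i$ and $-1$ respectively) and then invoke that an odd-order root of unity is never equal to $-1$, $i$, or $-i$. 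Everything else is the routine multiplicativity of $\lambda_{-1}$ under direct sums and the character formula $\Tr \lambda_{-1}\beta(g) = \det(1 - \beta(g))$.
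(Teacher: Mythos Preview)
Your proof is correct and rests on the same core observation as the paper's: because $|\Gamma|$ is odd, every eigenvalue of $\rho_\lambda(\gamma)$ is an odd-order root of unity and therefore avoids $-1$ and $\pm i$, which are exactly the values that would make a factor vanish.

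The packaging differs slightly. The paper first restricts to the cyclic subgroup $\langle\gamma\rangle$, decomposes each $V_\lambda$ into one-dimensional pieces $W_\mu$, and then computes the characters explicitly, obtaining $\tr_{(\gamma,J)}\bigl(\bigwedge^{*}\mu\otimes h\bigr)=1+\mu(\gamma)^{2}$ and $\tr_{(\gamma,J)}\bigl(\bigwedge^{*}\mu\otimes c\bigr)=1+\mu(\gamma)$. You instead invoke the identity $\Tr_{g}\lambda_{-1}\beta=\det(1-g|_{\beta})$ and argue directly with eigenvalues on the (possibly higher-dimensional) $V_\lambda$, bypassing the reduction to the abelian case. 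Your route is a bit more streamlined; the paper's route has the small advantage of producing explicit closed-form values for each factor, which feeds into the later trace computations (Corollary~\ref{cor lem on cyclic gp} and Theorem~\ref{thm main thm}). One minor notational slip: the class $h$ corresponds to the $2$-dimensional complex representation $\HH$, not to $\HH_c=\HH\oplus\HH$, so the representation underlying $2N_\lambda d_\lambda\, h$ is $\HH^{\oplus 2N_\lambda d_\lambda}$ rather than $\HH_c^{\oplus 2N_\lambda d_\lambda}$; this does not affect your argument since $J$ has eigenvalues $\pm i$ on either.
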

\begin{proof}
The subgroup of $\Gamma$ generated by $\gamma$ is denoted by $\left<\gamma\right>$, and the natural inclusion $\left<\gamma\right>\hookrightarrow \Gamma$ is denoted by $i$.
The restriction to $\left<\gamma\right>$ of the $\Gamma$-representation $V_\lambda$ can be decomposed as $i^*V_\lambda=\oplus_{\mu\in \widehat{\left<\gamma\right>}} W_{\mu}^{\oplus n_{\lambda \mu}}$, where $W_\mu$ is the representation space of an irreducible representation $\mu$ of $\left<\gamma\right>$. In the representation ring, it can be written as $i^*\lambda=\sum_{\mu\in \widehat{\left<\gamma\right>}} n_{\lambda \mu}\mu$.

Then
\begin{align*}
&i^*\bra{\prod_{\lambda\neq \rho_{\rm triv}}\bigwedge^*\bra{\lambda\otimes\bra{2N_\lambda\cdot \dim( V_\lambda^*)h+ M_\lambda\cdot \dim( V_\lambda^*)c}}}\\
&=\prod_{\lambda\neq \rho_{\rm triv}}\bigwedge^*\sum_{\mu\in \widehat{\left<\gamma\right>}} n_{\lambda \mu}\mu\otimes\bra{\bra{2N_\lambda\cdot \dim( V_\lambda^*)h+ M_\lambda\cdot \dim( V_\lambda^*)c}}\\
&=\prod_{\lambda\neq \rho_{\rm triv},\mu\in \widehat{\left<\gamma\right>}}\bra{\bigwedge^*\mu\otimes h}^{2n_{\lambda \mu}N_\lambda\cdot \dim( V_\lambda^*)}\otimes 
\prod_{\lambda\neq \rho_{\rm triv},\mu\in \widehat{\left<\gamma\right>}}\bra{\bigwedge^*\mu\otimes c}^{n_{\lambda \mu}M_\lambda\cdot \dim( V_\lambda^*)}.
\end{align*}
Thus, it suffices to prove that $\Tr_{(\gamma,J)}\bra{\bigwedge^*{\mu}\otimes h}\neq 0$ and $\Tr_{(\gamma,J)}\bra{{\mu}\otimes c}\neq 0$.

Since $\bigwedge^*{\mu}\otimes h=1\otimes 1-{\mu}\otimes h+\mu^2\otimes 1=(1+\mu^{2})\otimes 1-{\mu}\otimes h$,
\begin{align*}
\tr_{(\gamma,J)}(\bigwedge^*{\mu}\otimes h)
&= \tr_\gamma(1+\mu^{2})-\tr_\gamma(\mu)\tr_J(h) \\
&=1+\mu(\gamma)^2.
\end{align*}
We denote the order of $\gamma$ by ${\rm ord}(\gamma)$.
Since $\#\Gamma$ is odd, ${\rm ord}(\gamma)$ is also odd. Since $\mu(\gamma)$ is an ${\rm ord}(\gamma)$-th root, its square is not $-1$. Thus $\tr_{(\gamma,J)}(\bigwedge^*{\mu}\otimes h)\neq 0$.

In much the same way, the latter statement can be proved.
\end{proof}

By the same calculation, we will obtain a nontrivial result on $\alpha(\SW_X)$ from Theorem \ref{pro SW of X VS SW of M}.

\begin{nota}

Since $R(\Gamma\times {\rm Pin}(2))\cong R(\Gamma)\otimes R( {\rm Pin}(2))$, $\alpha(\SW_X)$ can be written as
$$\alpha_0(X)-\widetilde{\alpha}_0(X)c+\sum_{k=1}^\infty \alpha_k(X)h^k,$$
where $\alpha_0(X)$, $\widetilde{\alpha}_0(X)$, $\alpha_k(X)\in R(\Gamma)$ and all but a finite number of $\alpha_k(X)$'s are $0$.
\end{nota}
\begin{thm}[\cite{Fur}]\label{Fur}
The restriction of $\alpha(\SW_X)$ to ${\rm Pin}(2)$ is given by $2^{m(X)-2k(X)-1}(1-c)$, where 
$2k(X)=-\frac{\sigma(X)}{8}=\ind(D_{\rm Spin}(X))$ and $m(X)=b^+(X)=\ind(D_{\rm AHS}(X))-1$.
In particular, $\tr_{e}(\alpha_0(X))-\tr_{e}(\widetilde{\alpha}_0(X))=0$.
\end{thm}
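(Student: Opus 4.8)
The first assertion is Furuta's degree computation applied to $X$ itself with the $\Gamma$-symmetry forgotten, so the plan is to realize the restriction-to-${\rm Pin}(2)$ of the equivariant degree as Furuta's non-equivariant invariant. The restriction homomorphism $R(\Gamma\times{\rm Pin}(2))\to R({\rm Pin}(2))$ induced by $\{e\}\times{\rm Pin}(2)\hookrightarrow\Gamma\times{\rm Pin}(2)$ sends $\alpha(\SW_X)$ to the $K_{{\rm Pin}(2)}$-theoretic degree of the monopole map of $X$ regarded merely as a proper ${\rm Pin}(2)$-equivariant map. Concretely, the finite-dimensional approximation $f_X$ is $(\Gamma\times{\rm Pin}(2))$-equivariant, and discarding the $\Gamma$-equivariance leaves a ${\rm Pin}(2)$-equivariant approximation of exactly the type treated by Furuta \cite{Fur} (cf. the Proposition in Section~\ref{rev}), with ${\rm Pin}(2)$-module structure $V\cong\tilde{\R}^{m}\oplus\HH^{n+k}$ and $W\cong\tilde{\R}^{m+b}\oplus\HH^n$. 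Thus the restricted degree is $2^{b-2k-1}(1-c)$ with $b=b^+(X)$ and $k=-\sigma(X)/16$; substituting $b=m(X)$ and $k=k(X)$ gives $2^{m(X)-2k(X)-1}(1-c)$, the index identities $m(X)=\ind(D_{\rm AHS}(X))-1$ and $2k(X)=\ind(D_{\rm Spin}(X))$ being those recorded in the statement.

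For the \emph{in particular} clause, I would first make the restriction map explicit. Under $R(\Gamma\times{\rm Pin}(2))\cong R(\Gamma)\otimes R({\rm Pin}(2))$, restriction to $\{e\}\times{\rm Pin}(2)$ is $\tr_e\otimes\id$: on a pure tensor $\lambda\otimes\beta$ it returns $(\dim V_\lambda)\beta=\tr_e(\lambda)\beta$. Applying it to the expansion $\alpha(\SW_X)=\alpha_0(X)-\widetilde{\alpha}_0(X)c+\sum_{k\geq 1}\alpha_k(X)h^k$ produces
\[
\tr_e(\alpha_0(X))-\tr_e(\widetilde{\alpha}_0(X))\,c+\sum_{k\geq 1}\tr_e(\alpha_k(X))\,h^k\in R({\rm Pin}(2)).
\]

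Next I would compare this with the formula just obtained. Writing $R({\rm Pin}(2))=\Z[h]\oplus\Z c$ and expanding $2^{m(X)-2k(X)-1}(1-c)=2^{m(X)-2k(X)-1}-2^{m(X)-2k(X)-1}c$, I match the two free summands separately. Equating the $\Z[h]$-parts forces $\tr_e(\alpha_0(X))=2^{m(X)-2k(X)-1}$ together with $\tr_e(\alpha_k(X))=0$ for every $k\geq 1$, while equating the $\Z c$-parts forces $\tr_e(\widetilde{\alpha}_0(X))=2^{m(X)-2k(X)-1}$. Subtracting the two equal scalars gives $\tr_e(\alpha_0(X))-\tr_e(\widetilde{\alpha}_0(X))=0$, as required.

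This argument is a citation followed by bookkeeping, so there is no analytic obstacle; the only step needing genuine care is the first sentence. The hard part will be checking that the $\Gamma$-forgetful image of the $(\Gamma\times{\rm Pin}(2))$-equivariant finite-dimensional approximation is a bona fide ${\rm Pin}(2)$-approximation computing Furuta's invariant --- i.e. that the spectral truncations underlying the equivariant approximation are compatible with the non-equivariant ones and yield the ${\rm Pin}(2)$-module identifications above. Once this compatibility is granted, identifying the restriction with $\tr_e\otimes\id$ and reading off coefficients in $\Z[h]\oplus\Z c$ are entirely routine.
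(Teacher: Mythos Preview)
Your argument is correct. Note, however, that the paper does not supply its own proof of this statement: it is recorded as a citation of Furuta's result \cite{Fur}, with the ``in particular'' clause left as an immediate consequence. What you have written is exactly the unpacking one would expect---identify the restriction to ${\rm Pin}(2)$ with forgetting the $\Gamma$-equivariance so that the Proposition of Section~\ref{rev} applies verbatim to $X$, then read off the coefficients in $R({\rm Pin}(2))=\Z[h]\oplus\Z c$ via $\tr_e\otimes\id$. So your approach and the paper's are the same in spirit; you simply spell out the bookkeeping the paper leaves implicit. Your caveat about compatibility of the finite-dimensional approximations is well placed but harmless: a $(\Gamma\times{\rm Pin}(2))$-equivariant spectral truncation is in particular a ${\rm Pin}(2)$-equivariant one, and the $K_{{\rm Pin}(2)}$-degree is independent of the choice of approximation.
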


The result of this section is to give some new information on $\alpha_0(X)$ and $\widetilde{\alpha}_0(X)$.

\begin{lem}\label{lem3.7}
For an odd number $n$, $\prod_{k=1}^{n-1}\bbbra{1+\exp\bra{\frac{2k\pi i}{n}}}=1$.

\end{lem}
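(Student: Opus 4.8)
The plan is to recognize the product $\prod_{k=1}^{n-1}\bigl[1+\exp(2k\pi i/n)\bigr]$ as a value of the polynomial $\prod_{k=1}^{n-1}(x-\zeta^k)$, where $\zeta=\exp(2\pi i/n)$. Since $\zeta,\zeta^2,\dots,\zeta^{n-1}$ are exactly the roots of $x^{n-1}+x^{n-2}+\cdots+x+1 = (x^n-1)/(x-1)$, we have the identity $\prod_{k=1}^{n-1}(x-\zeta^k) = \dfrac{x^n-1}{x-1} = x^{n-1}+\cdots+1$ as polynomials in $x$.

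The next step is to specialize this identity cleverly. The product in the lemma is $\prod_{k=1}^{n-1}(1+\zeta^k) = \prod_{k=1}^{n-1}\bigl(-(-1-\zeta^k)\bigr) = (-1)^{n-1}\prod_{k=1}^{n-1}\bigl((-1)-\zeta^k\bigr)$. Plugging $x=-1$ into the polynomial identity gives $\prod_{k=1}^{n-1}((-1)-\zeta^k) = \dfrac{(-1)^n-1}{(-1)-1} = \dfrac{(-1)^n-1}{-2}$. Because $n$ is odd, $(-1)^n=-1$, so this equals $\dfrac{-2}{-2}=1$. Combining, $\prod_{k=1}^{n-1}(1+\zeta^k) = (-1)^{n-1}\cdot 1 = 1$, again using that $n$ is odd so $(-1)^{n-1}=1$.

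The argument is essentially a one-line evaluation, so there is no real obstacle; the only point requiring the oddness hypothesis is the step $(-1)^n-1 = -2$, which makes the numerator nonzero (and the factor $(-1)^{n-1}$ trivial). For even $n$ the root $\zeta^{n/2}=-1$ would make the factor $1+\zeta^{n/2}$ vanish, so the statement would be false; this matches the use of the lemma elsewhere, where $\#\Gamma$ being odd is exactly what guarantees nonvanishing. An alternative route, if one prefers to avoid polynomial identities, is to pair $k$ with $n-k$: for $n$ odd this pairs up all of $\{1,\dots,n-1\}$ with no fixed point, and $(1+\zeta^k)(1+\zeta^{n-k}) = 1 + \zeta^k + \zeta^{-k} + 1 = 2 + 2\cos(2k\pi/n)$, reducing to a product of positive reals; but the polynomial specialization is cleaner and I would present that one.
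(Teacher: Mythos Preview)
Your proof is correct and takes a genuinely different route from the paper's. The paper expands $\prod_{k=1}^{n-1}(1+\zeta^k)=\sum_{k=0}^{n-1}\sigma_k(\zeta,\dots,\zeta^{n-1})$ and then shows by induction, via Newton's identities and the fact that the power sums satisfy $p_j(\zeta,\dots,\zeta^{n-1})=-1$ for $1\le j\le n-1$, that $\sigma_k(\zeta,\dots,\zeta^{n-1})=(-1)^k$; the alternating sum $\sum_{k=0}^{n-1}(-1)^k$ then equals $1$ for odd $n$. Your argument bypasses all of this machinery by recognizing $\prod_{k=1}^{n-1}(x-\zeta^k)=(x^n-1)/(x-1)$ and specializing at $x=-1$, which is considerably shorter and isolates the role of the oddness hypothesis just as cleanly. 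The paper's approach does yield the individual values of the elementary symmetric polynomials as a byproduct, but these are not used anywhere else in the paper, so nothing is lost by your simplification.
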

\begin{proof}
Let $\zeta$ be a primitive $n$-th root. Let $\sigma_k$ be the $k$-th elementary symmetric polynomial of $(n-1)$ variables. Then
$$\prod_{k=1}^{n-1}(1+\zeta^k)=\sum_{k=0}^{n-1}\sigma_k(\zeta,\zeta^2,\cdots,\zeta^{n-1}).$$
Let $p_k$ be the $k$-th power sum $p_k(X_1,X_2,\cdots):=X_1^k+X_2^k+\cdots$. We prove that $\sigma_k(\zeta,\zeta^2,\cdots,\zeta^{n-1})=(-1)^k$ by induction on $k$. Note that $p_k(\zeta,\zeta^2,\cdots,\zeta^{n-1})+1=0$ if $1\leq k\leq n-1$, in particular $\sigma_1(\zeta,\zeta^2,\cdots,\zeta^{n-1})=-1$.
By Newton's identity,
\begin{align*}
k\sigma_k(\zeta,\zeta^2,\cdots,\zeta^{n-1})
&=\sum_{j=1}^k(-1)^{j-1}\sigma_{k-j}(\zeta,\zeta^2,\cdots,\zeta^{n-1})p_j(\zeta,\zeta^2,\cdots,\zeta^{n-1})\\
&=\sum_{j=1}^k(-1)^{j-1}(-1)^{k-j}(-1)\\
&=(-1)^kk.
\end{align*}

Thus, 
$$\prod_{k=1}^{p-1}(1+\zeta^k)=\sum_{i=0}^{n-1}(-1)^k=1.$$
\end{proof}

For each $n\in \bb{N}$, $L^2(\bb{Z}_n)$ is a unitary representation of $\bb{Z}_n$. Thus it defines an element of $R(\bb{Z}_n)$. The corresponding element to it is denote by $[L^2(\bb{Z}_n)]$.
 
\begin{cor}\label{cor lem on cyclic gp}
If $n$ is an odd number, for any generator $\gamma\in \bb{Z}_n$,
$$\Tr_{(\gamma^k,J)}\bra{\bigwedge^* [L^2(\bb{Z}_n)]\otimes h}
=\begin{cases}1&(k\notin n\bb{Z}) \\
2^{n}&(k\in n\bb{Z}). \end{cases}$$
\end{cor}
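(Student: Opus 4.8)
The plan is to reduce the computation of $\Tr_{(\gamma^k, J)}(\bigwedge^*[L^2(\bb{Z}_n)]\otimes h)$ to the decomposition of the regular representation into irreducibles and then apply Lemma \ref{lem3.7}. First I would recall that as a $\bb{Z}_n$-representation, $[L^2(\bb{Z}_n)] = \sum_{l=0}^{n-1}\rho_l$, where $\rho_l$ is the one-dimensional representation sending a fixed generator $\gamma$ to $\exp(2\pi i l/n)$. Since $\bigwedge^*$ turns direct sums into products (that is, $\bigwedge^*(\alpha+\beta) = \bigwedge^*\alpha \cdot \bigwedge^*\beta$ in the representation ring), we get
\[
\bigwedge^*\bra{[L^2(\bb{Z}_n)]\otimes h} = \prod_{l=0}^{n-1}\bigwedge^*\bra{\rho_l\otimes h}.
\]
Then, exactly as in the proof of the preceding lemma where the identity $\bigwedge^*(\mu\otimes h) = (1+\mu^2)\otimes 1 - \mu\otimes h$ was established, I would take the character at $(\gamma^k, J)$. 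Since $\tr_J(h) = 0$, the cross term drops out and each factor contributes $\Tr_{(\gamma^k,J)}(\bigwedge^*(\rho_l\otimes h)) = 1 + \rho_l(\gamma^k)^2 = 1 + \exp(4\pi i kl/n)$.

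So the whole trace equals $\prod_{l=0}^{n-1}\bbbra{1+\exp\bra{\frac{4\pi i kl}{n}}}$, and it remains to evaluate this product in the two cases. If $k \in n\bb{Z}$, every exponential is $1$, so each factor is $2$ and the product is $2^n$. If $k\notin n\bb{Z}$, I would set $d = \gcd(k,n)$ and $n' = n/d$, so that $n'$ is an odd number $> 1$; as $l$ ranges over $0,\dots,n-1$, the quantity $2kl \bmod n$ takes each multiple of $d$ exactly $d$ times — more carefully, $\{kl \bmod n : 0\le l < n\} = d\cdot\{0,1,\dots,n'-1\}$ with multiplicity $d$, and since $n$ (hence $n'$) is odd, multiplication by $2$ is a bijection of $\bb{Z}/n'$, so $\{2kl \bmod n\}$ is again $d\cdot\{0,\dots,n'-1\}$ with multiplicity $d$. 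Hence
\[
\prod_{l=0}^{n-1}\bbbra{1+\exp\bra{\frac{4\pi i kl}{n}}} = \bra{\prod_{j=0}^{n'-1}\bbbra{1+\exp\bra{\frac{2\pi i j}{n'}}}}^{d} = \bra{2\cdot\prod_{j=1}^{n'-1}\bbbra{1+\exp\bra{\frac{2\pi i j}{n'}}}}^{d},
\]
and by Lemma \ref{lem3.7} the inner product over $j=1,\dots,n'-1$ is $1$, giving $2^d$.

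Here I should be careful: the claimed answer in the case $k\notin n\bb{Z}$ is $1$, not $2^d$, so either the statement intends $\Tr_{(\gamma^k,J)}$ only for $k$ with $\gcd(k,n)=1$ (equivalently $\gamma^k$ a generator, which is implicit since the corollary speaks of "any generator $\gamma$" and the $L^2(\bb{Z}_n)$-class is insensitive to which generator one uses), or else one replaces the factor $2$ by noting it is absorbed — but in fact when $\gcd(k,n)=1$ we have $d=1$, $n'=n$, and the product is $\prod_{j=1}^{n-1}[1+\exp(2\pi i j/n)]\cdot 2^{?}$; recounting, with $d=1$ the exponents $2kl\bmod n$ run over all of $\{0,1,\dots,n-1\}$ once, so the product is $\prod_{j=0}^{n-1}[1+\exp(2\pi i j/n)] = 2\prod_{j=1}^{n-1}[1+\exp(2\pi i j/n)] = 2\cdot 1 = 2$. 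The main obstacle is thus pinning down the exact normalization convention the authors use — whether $h$ has $\tr_J(h)=0$ and whether the ambient claim silently restricts to generators — and I would resolve it by observing that the corollary as used downstream only needs $k$ coprime to $n$ (the generator case) versus $k\equiv 0$, and for those the two values are genuinely distinguished; the reindexing bijection argument above then finishes it cleanly once Lemma \ref{lem3.7} is in hand.
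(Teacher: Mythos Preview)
Your approach is identical to the paper's: decompose $L^2(\bb{Z}_n)$ via Peter--Weyl as $\bigoplus_{l=0}^{n-1}\bb{C}_l$, use multiplicativity of $\lambda_{-1}$ to write the trace as $\prod_{l=0}^{n-1}(1+\zeta^{2kl})$, and invoke Lemma~\ref{lem3.7}. The paper's proof simply records the per-factor value $1+\zeta^{2kl}$ and finishes with ``By Lemma~\ref{lem3.7}, we obtain the conclusion.''

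Your more careful bookkeeping is correct, and the discrepancy you flagged is genuine rather than a matter of convention. The product runs over $l=0,\dots,n-1$ and therefore includes the $l=0$ factor, which equals $2$; Lemma~\ref{lem3.7} covers only $l=1,\dots,n-1$. Hence for $k\notin n\bb{Z}$ the product equals $2^{\gcd(k,n)}$ (in particular $2$, not $1$, when $\gcd(k,n)=1$), exactly as you computed. Your hesitation about $\tr_J(h)$ is unwarranted --- the value $\tr_J(h)=0$ is correct and is precisely what the paper uses in the preceding lemma --- so the stated Corollary is simply off by this factor, and the paper's one-line proof overlooks the $l=0$ contribution when applying Lemma~\ref{lem3.7}. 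You have not failed to prove the claim; you have correctly computed the quantity and detected an arithmetic slip in the statement as written.
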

\begin{proof}
By the Peter-Weyl theorem, $L^2(\bb{Z}_n)\cong \bigoplus_{l=0,1,\cdots,n-1} \bb{C}_l$, where $\bb{C}_1$ is the irreducible representation of $\bb{Z}_n$ so that $\gamma$ acts as a fixed primitive $n$-th root $\zeta$ and $\bb{C}_l=\bb{C}_1^{\otimes l}$. Thus $L^2(\bb{Z}_n)\otimes \bb{H}\cong \bigoplus_{l=0,1,\cdots,n-1} \bb{C}_l\otimes \bb{H}$, and hence 
$$\bigwedge^* L^2(\bb{Z}_n)\otimes h
\cong \bigotimes_{l=0,1,\cdots,n-1} \bigwedge^* [\bb{C}_l]\otimes h$$
Note that 
$$\Tr_{(\gamma^k,J)}\bigwedge^* [\bb{C}_l]\otimes h
=\begin{cases}1+\zeta^{2kl} &(k\notin n\bb{Z}) \\
2&(k\in n\bb{Z}). \end{cases}.$$ 
By Lemma \ref{lem3.7}, we obtain the conclusion.

\end{proof}

\begin{thm}\label{thm main thm}
$$\alpha_0(X)+\widetilde{\alpha}_0(X)=
2^{m-2k}\bra{\frac{2^{(\#\Gamma-1)(m-2k+1)}-1}{\#\Gamma}[L^2(\Gamma)]+\rho_{\rm triv}}.$$
\end{thm}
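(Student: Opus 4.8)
The plan is to extract the claimed identity from the factored equation in Theorem \ref{pro SW of X VS SW of M} by taking characters. Write the key relation of that theorem in the schematic form $P\cdot\bra{\alpha(\SW_X)\cdot Q_{\rm Spin}-Q_{\rm AHS}\cdot\alpha(\SW_M)}=0$, where $P=\prod_{\lambda\neq\rho_{\rm triv}}\bigwedge^*\bra{\lambda\otimes(2N_\lambda\dim(V_\lambda^*)h+M_\lambda\dim(V_\lambda^*)c)}$, $Q_{\rm Spin}=\prod_{\lambda\neq\rho_{\rm triv}}\bigwedge^*\bra{\lambda\otimes(\ind(D_{\rm Spin}(M))\dim(V_\lambda^*)h)}$ and $Q_{\rm AHS}=\prod_{\lambda\neq\rho_{\rm triv}}\bigwedge^*\bra{\lambda\otimes(\ind(D_{\rm AHS}(M))\dim(V_\lambda^*)c)}$. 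By the Lemma preceding this statement, $\Tr_{(\gamma,J)}(P)\neq 0$ for every $\gamma\in\Gamma$, so after evaluating the character at $(\gamma,J)$ and dividing by $\Tr_{(\gamma,J)}(P)$ I get
\[
\Tr_{(\gamma,J)}\bra{\alpha(\SW_X)}\cdot\Tr_{(\gamma,J)}(Q_{\rm Spin})=\Tr_{(\gamma,J)}(Q_{\rm AHS})\cdot\Tr_{(\gamma,J)}\bra{\alpha(\SW_M)}
\]
for all $\gamma$. The point of specializing to the element $J\in\Pin(2)$ is that $\tr_J(h)=0$ and $\tr_J(c)=-1$, so each $\bigwedge^*(\mu\otimes h)$ contributes $1+\mu(\gamma)^2$ and each $\bigwedge^*(\mu\otimes c)$ contributes $1+\mu(\gamma)$; in particular $\Tr_{(\gamma,J)}(\alpha(\SW_X))=\Tr_\gamma(\alpha_0(X)+\widetilde\alpha_0(X))$ because the $h^k$-terms die at $J$, and similarly for $M$ where $\alpha(\SW_M)=2^{m-2k-1}(1-c)$ gives $\Tr_{(\gamma,J)}(\alpha(\SW_M))=2^{m-2k}$.

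Next I would compute the two remaining character factors. For $Q_{\rm Spin}$, decompose $i^*V_\lambda$ over $\left<\gamma\right>$ as in the previous Lemma; since $\ind(D_{\rm Spin}(M))=2k$ the exponent of each $\bigwedge^*(\mu\otimes h)$ is $2k\cdot n_{\lambda\mu}\dim(V_\lambda^*)$, and collecting over all $\lambda\neq\rho_{\rm triv}$ the multiset of $\mu$'s (with multiplicities $n_{\lambda\mu}\dim(V_\lambda^*)$) is exactly the decomposition of $i^*\bra{L^2(\Gamma)\ominus\rho_{\rm triv}}=i^*L^2(\Gamma)\ominus 1$, i.e.\ for a cyclic $\left<\gamma\right>$ of order $d=\ord(\gamma)$ it is $\frac{\#\Gamma}{d}$ copies of the regular representation $L^2(\Z_d)$ minus one trivial. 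So $\Tr_{(\gamma,J)}(Q_{\rm Spin})=\Bigl(\prod_{\mu}(1+\mu(\gamma)^2)\Bigr)^{2k}$ where the product runs over that multiset. By Corollary \ref{cor lem on cyclic gp} (applied to odd $d$), $\prod$ over one copy of $L^2(\Z_d)$ of $(1+\mu(\gamma)^2)$ equals $1$ if $\gamma\neq e$ and $2^d$ if $\gamma=e$; dividing off the single trivial factor $(1+1)=2$, I get $\Tr_{(\gamma,J)}(Q_{\rm Spin})=1$ for $\gamma\neq e$ and $=\bra{2^{\#\Gamma-1}}^{2k}=2^{2k(\#\Gamma-1)}$ for $\gamma=e$. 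An entirely parallel computation with $\tr_J(c)=-1$ and the ``latter statement'' of the earlier Lemma gives $\Tr_{(\gamma,J)}(Q_{\rm AHS})=\bra{\prod_\mu(1+\mu(\gamma))}^{\ind(D_{\rm AHS}(M))}$; here $\ind(D_{\rm AHS}(M))=m+1$, and Lemma \ref{lem3.7} gives $\prod$ over $L^2(\Z_d)$ of $(1+\mu(\gamma))=1$ for $\gamma\neq e$ and $2^d$ for $\gamma=e$, so after removing the trivial factor $\Tr_{(\gamma,J)}(Q_{\rm AHS})=1$ for $\gamma\neq e$ and $=2^{(m+1)(\#\Gamma-1)}$ for $\gamma=e$.

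Plugging these in: for $\gamma\neq e$ the identity reads $\Tr_\gamma(\alpha_0(X)+\widetilde\alpha_0(X))\cdot 1=1\cdot 2^{m-2k}$, and for $\gamma=e$ it reads $\Tr_e(\alpha_0(X)+\widetilde\alpha_0(X))\cdot 2^{2k(\#\Gamma-1)}=2^{(m+1)(\#\Gamma-1)}\cdot 2^{m-2k}$, i.e.\ $\Tr_e(\alpha_0(X)+\widetilde\alpha_0(X))=2^{(m-2k+1)(\#\Gamma-1)+m-2k}=2^{m-2k}\cdot 2^{(m-2k+1)(\#\Gamma-1)}$. A class function on $\Gamma$ that is constant $=2^{m-2k}$ off the identity and $=2^{m-2k}\cdot 2^{(m-2k+1)(\#\Gamma-1)}$ at the identity is uniquely an element of $R(\Gamma)$ once we check integrality of the coefficients, and a direct inner-product computation against the regular character identifies it: writing it as $a[L^2(\Gamma)]+b\rho_{\rm triv}$ forces $a\cdot\#\Gamma+b=2^{m-2k}\cdot 2^{(m-2k+1)(\#\Gamma-1)}$ at $e$ and $b=2^{m-2k}$ off $e$, so $a=\frac{2^{m-2k}\bra{2^{(m-2k+1)(\#\Gamma-1)}-1}}{\#\Gamma}=2^{m-2k}\cdot\frac{2^{(\#\Gamma-1)(m-2k+1)}-1}{\#\Gamma}$, which is exactly the claimed formula. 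Two points deserve care and will be the main obstacle: first, that the identity of \emph{characters} evaluated only at the special points $(\gamma,J)$ really does determine $\alpha_0(X)+\widetilde\alpha_0(X)$ as an element of $R(\Gamma)$ — this is fine because $(\alpha_0+\widetilde\alpha_0)$ lies in $R(\Gamma)$, whose elements are determined by their character on all of $\Gamma$, and the $J$-slot is just a bookkeeping device to kill the $h^k$ contributions while keeping $P$ nonzero; second, the integrality of $a$, which must be argued either from the fact that $\alpha_0+\widetilde\alpha_0\in R(\Gamma)$ a priori (so the class function is automatically a virtual character and the inner product $a$ is an integer) or, more explicitly, by the congruence $2^{(m-2k+1)(\#\Gamma-1)}\equiv 1 \pmod{\#\Gamma}$, which for $\Gamma=\Z_p$ is Fermat and in general follows since $2^{m-2k+1}$ is a unit mod $\#\Gamma$ of order dividing $\#\Gamma-1$ — here one should note $\gcd(2,\#\Gamma)=1$ by the odd-order assumption. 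I would also double-check the exponent arithmetic $(m-2k+1)(\#\Gamma-1)+(m-2k)$ against the Main Theorem's $\Z_p$ specialization as a consistency check before finalizing.
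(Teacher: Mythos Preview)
Your argument is correct and mirrors the paper's own proof almost step for step: specialize the factored identity of Theorem~\ref{pro SW of X VS SW of M} at the points $(\gamma,J)$, cancel the common factor $P$ using the preceding nonvanishing lemma, rewrite $\sum_{\lambda\neq\rho_{\rm triv}}\dim(V_\lambda^*)\lambda=[L^2(\Gamma)]-\rho_{\rm triv}$ and evaluate the resulting products via Corollary~\ref{cor lem on cyclic gp}, and then identify $\alpha_0+\widetilde\alpha_0$ from its character values on $\Gamma$. One small correction to your closing aside: the congruence $2^{(\#\Gamma-1)(m-2k+1)}\equiv 1\pmod{\#\Gamma}$ does \emph{not} follow from ``order dividing $\#\Gamma-1$'' when $\#\Gamma$ is composite (the order of a unit mod $n$ divides $\varphi(n)$, not $n-1$), so rely on your first and sufficient observation that $\alpha_0+\widetilde\alpha_0\in R(\Gamma)$ a priori, which already forces the coefficient to be an integer.
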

\begin{proof}
By Lemma \ref{lem3.7}, for any $\gamma\in \Gamma$, we obtain
\begin{align*}
&\Tr_{(\gamma,J)}\bra{\alpha(\SW_X)\cdot \prod_{\lambda\neq \rho_{\rm triv}}\bigwedge^*\bra{\lambda\otimes\bra{\ind(D_{\rm Spin}(M))\cdot \dim( V_\lambda^*)h}}
}\\
&\ \ \ =\Tr_{(\gamma,J)}\bra{\prod_{\lambda\neq \rho_{\rm triv}}\bigwedge^*\bra{V_\lambda\otimes\bra{\ind(D_{\rm AHS}(M))\cdot \dim( V_\lambda^*)c}}\cdot \alpha(\SW_M)}.
\end{align*}

By  the Peter-Weyl theorem,
\begin{align*}
&\prod_{\lambda\neq \rho_{\rm triv}}\bigwedge^*\bra{\lambda\otimes\bra{\ind(D_{\rm Spin}(M))\cdot \dim( V_\lambda^*)h}} \\
&=\bbra{\bigwedge^*\bra{\sum_{\lambda\neq \rho_{\rm triv}}\dim( V_\lambda^*)\lambda}\otimes h}^{\ind(D_{\rm Spin}(M))}\\
&=\bbra{\bigwedge^*\bra{[L^2(\Gamma)]-1}\otimes h}^{\ind(D_{\rm Spin}(M))},
\end{align*}
where $1$ is the trivial representation space of $\Gamma$. We have
$$\Tr_{(\gamma,J)}\bra{\bigwedge^*\bra{[L^2(\Gamma)]-1}\otimes h}
=\frac{\Tr_{(\gamma,J)}\bra{\bigwedge^*[L^2(\Gamma)]\otimes h}}
{\Tr_{(\gamma,J)}\bra{\bigwedge^*1\otimes h}}.
$$

Obviously,
$$\Tr_{(\gamma,J)}\bra{\bigwedge^*1\otimes h}
=2.$$

By Corollary \ref{cor lem on cyclic gp},

\begin{align*}
\Tr_{(\gamma,J)}\bra{\bigwedge^*\bbbra{L^2(\Gamma)}\otimes h}
&=\Tr_{(\gamma,J)}\bra{\bigwedge^*\bbbra{L^2(\left<\gamma\right>)}\otimes h}^{\#\Gamma/{\rm ord}(\gamma)} \\
&=\bra{\Tr_{(\gamma,J)}\bigwedge^*\bbbra{L^2(\left<\gamma\right>)}\otimes h}^{\#\Gamma/{\rm ord}(\gamma)} \\
&=\begin{cases}
1 & \gamma\neq e\\
2^{\#\Gamma}& \gamma= e.
\end{cases}
\end{align*}

In the same way,
\begin{align*}
\Tr_{(\gamma,J)}\bra{\bigwedge^*\bbbra{L^2(\Gamma)}\otimes c}
&=\begin{cases}
1 & \gamma\neq e\\
2^{\#\Gamma}& \gamma= e.
\end{cases}
\end{align*}

Thus,
\begin{align}\label{trace}
\Tr_{(\gamma,J)}\alpha(\SW_X)=
\begin{cases}
\Tr_{J}\bra{\alpha(\SW_M)} & \gamma\neq e \\
2^{(\#\Gamma-1)(\ind(D_{\rm AHS}(M)-\ind(D_{\rm Spin}(M))} \Tr_{J}\bra{\alpha(\SW_M)}  & \gamma= e.
\end{cases}
\end{align}


On the other hand, 
thanks to $\Tr_{J}\bra{\alpha(\SW_M)}=2^{m-2k}$,
\begin{align*}
&\Tr_{(\gamma,J)}\bbra{2^{m-2k}\bra{\frac{2^{(\#\Gamma-1)(m-2k+1)}-1}{\#\Gamma}[L^2(\Gamma)]+\rho_{\rm triv}}}\\
&\ \ \ \ \ \ =
\begin{cases}
\Tr_{J}\bra{\alpha(\SW_M)} & \gamma\neq e \\
2^{(\#\Gamma-1)(\ind(D_{\rm AHS}(M)-\ind(D_{\rm Spin}(M))} \Tr_{J}\bra{\alpha(\SW_M)}  & \gamma= e
\end{cases}
\end{align*}
since
$$\Tr_\gamma[L^2(\Gamma)]=\begin{cases}0 & (\gamma\neq e) \\
\#\Gamma& (\gamma= e) \end{cases}.$$

Thanks to character theory of finite groups (see \cite{Ser} for details), we obtain the result.

\end{proof}

\section{$\bb{Z}_p$-coverings}

\subsection{Odd prime cases}

When $\Gamma=\bb{Z}_p$ for an odd prime number $p$, we can completely describe $\alpha(\SW_X)$.
First, we study the common coefficient. Note that $\dim(V_\lambda^*)=1$ for all $\lambda$ because $\bb{Z}_p$ is abelian.

\begin{lem}\label{coeff. does not vanish}
Let $(\gamma,g)\in \bb{Z}_p\times {\rm Pin}(2)$ be one of the following
\begin{itemize}
\item $g\in S^1$ is generic and $\gamma\neq \overline{0}$.
\item $g\notin S^1$.
\end{itemize}
Then, $\tr_{(\gamma,g)}\bra{\prod_{\lambda\neq \rho_{\rm triv}}\bigwedge^*\bra{\lambda\otimes\bra{{2N_\lambda}h+ {M_\lambda}c}}}\neq 0$.
\end{lem}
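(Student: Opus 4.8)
The plan is to reduce to the computation of two characters, exactly as in the proof of the preceding lemma (the one handling $\Tr_{(\gamma,J)}$ for $\gamma \in \Gamma$), but now keeping track of a generic $g \in S^1$ as well. Since $\bb{Z}_p$ is abelian, every $\lambda \neq \rho_{\rm triv}$ is one-dimensional, so the big product over $\lambda$ collapses: writing $\widehat{\bb{Z}_p} \setminus \{\rho_{\rm triv}\} = \{\rho_1,\dots,\rho_{p-1}\}$, the multiplicativity of $\bigwedge^*$ over direct sums gives
\[
\prod_{\lambda\neq \rho_{\rm triv}}\bigwedge^*\bra{\lambda\otimes\bra{{2N_\lambda}h+ {M_\lambda}c}}
= \prod_{l=1}^{p-1}\bra{\bigwedge^*\bra{\rho_l\otimes h}}^{2N_{\rho_l}}\otimes \prod_{l=1}^{p-1}\bra{\bigwedge^*\bra{\rho_l\otimes c}}^{M_{\rho_l}}.
\]
So it suffices to show that for each $l \in \{1,\dots,p-1\}$ the two factors $\tr_{(\gamma,g)}\bra{\bigwedge^*(\rho_l\otimes h)}$ and $\tr_{(\gamma,g)}\bra{\bigwedge^*(\rho_l\otimes c)}$ are nonzero under the two stated hypotheses on $(\gamma,g)$.

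For the $h$-factor: $\bigwedge^*(\rho_l \otimes h) = 1 - \rho_l\otimes h + \rho_l^2\otimes 1$ since $h$ is $2$-dimensional and $\bigwedge^2 h$ is the (trivial) determinant. Evaluating characters, and using that $h$ is the complexified defining representation of $\Pin(2)$ (so $\tr_g h = 2\cos\theta$ for $g = e^{i\theta} \in S^1$ and $\tr_g h = 0$ for $g \notin S^1$), I get
\[
\tr_{(\gamma,g)}\bra{\bigwedge^*(\rho_l\otimes h)} = 1 + \rho_l(\gamma)^2 - \rho_l(\gamma)\,\tr_g(h).
\]
In the first case ($g \in S^1$ generic, $\gamma \neq \overline 0$) this is $1 + \zeta^{2l} - 2\zeta^l\cos\theta$ where $\zeta = e^{2\pi i/p}$; viewed as a function of $\theta$ this vanishes for at most finitely many $\theta$ (indeed it equals $(1 - \zeta^l e^{i\theta})(1 - \zeta^l e^{-i\theta})$, which is zero only when $e^{\pm i\theta} = \zeta^{-l}$, impossible when $p$ is odd since then $\zeta^{-l}$ is not real — but in any case ``generic'' suffices), so it is nonzero for generic $g$. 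In the second case ($g \notin S^1$), $\tr_g(h) = 0$ and the expression is $1 + \rho_l(\gamma)^2$, which is nonzero because $\rho_l(\gamma)$ is a $p$-th root of unity with $p$ odd, so its square cannot be $-1$ — this is literally the argument already used in the preceding lemma. The $c$-factor is handled the same way: $\bigwedge^*(\rho_l\otimes c) = 1 - \rho_l\otimes c$ since $c$ is $1$-dimensional, so $\tr_{(\gamma,g)}\bra{\bigwedge^*(\rho_l\otimes c)} = 1 - \rho_l(\gamma)\,\tr_g(c)$; since $c$ has character $\pm 1$ (value $1$ on $S^1$, $-1$ off $S^1$), this is $1 - \rho_l(\gamma)$ or $1 + \rho_l(\gamma)$, and in either case it is nonzero because $\rho_l(\gamma) = \zeta^l \neq \pm 1$ (again using $\gamma \neq \overline 0$ and $p$ odd prime so $\zeta^l$ is a primitive $p$-th root, hence not real). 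Note the $c$-factor does not even need the genericity hypothesis — the constraint $\gamma \neq \overline 0$ carries it.

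Assembling: in both cases every factor in the displayed product is a nonzero scalar, and a finite product of nonzero complex numbers is nonzero, giving the claim. The main (only) obstacle is bookkeeping the case distinction for $g$: one must be careful that ``$g \in S^1$ generic'' is exactly what is needed to avoid the finitely many bad angles where $1 + \zeta^{2l} - 2\zeta^l\cos\theta = 0$, whereas for $g \notin S^1$ no genericity is needed and the argument is purely the root-of-unity parity argument from the previous lemma. I would state the genericity as: the set of $g \in S^1$ for which some factor vanishes is finite (a finite union, over $l$, of at most two-point sets), hence its complement is the relevant generic set; alternatively, since $\tr_{(\gamma,g)}$ of the coefficient is a nonzero Laurent polynomial in $e^{i\theta}$, it vanishes on a finite set only. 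Either phrasing closes the proof with no further computation.
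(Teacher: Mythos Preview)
Your argument is correct and is essentially the paper's own proof: reduce by multiplicativity of $\bigwedge^*$ to the two characters $\tr_{(\gamma,g)}\bigl(\bigwedge^*(\lambda\otimes h)\bigr) = 1+\lambda(\gamma)^2-\lambda(\gamma)\,\tr_g(h)$ and $\tr_{(\gamma,g)}\bigl(\bigwedge^*(\lambda\otimes c)\bigr) = 1-\lambda(\gamma)\,\tr_g(c)$, then check nonvanishing case by case. One small bookkeeping point (since you flagged bookkeeping as the only hazard): your $c$-factor argument invokes $\gamma\neq\overline 0$ to conclude $\rho_l(\gamma)\neq\pm 1$, but the second bullet $g\notin S^1$ carries no hypothesis on $\gamma$; there the relevant value is $1+\rho_l(\gamma)$, which for $\gamma=\overline 0$ equals $2\neq 0$, so the conclusion survives --- the paper's phrasing ``$1+\lambda(\gamma)\neq 0$ since $p$ is odd'' covers all $\gamma$ at once.
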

\begin{proof}

As usual, it suffices to compute $\tr_{(\gamma,g)}(\bigwedge^*\lambda\otimes{h})$ and $\tr_{(\gamma,g)}(\bigwedge^*\lambda\otimes{c})$.

Since $\bigwedge^*\lambda\otimes{h}=1\otimes 1-\lambda\otimes{h}+\lambda^2\otimes 1=(1+\lambda^2)\otimes 1-\lambda\otimes{h}$,
\begin{align*}
\tr_{(\gamma,g)}(\bigwedge^*\lambda\otimes{h})
&= \tr_\gamma(1+\lambda^2)-\tr_\gamma(\lambda)\tr_g(h) \\
&=\begin{cases}1+\lambda(\gamma)^2-\lambda(\gamma)(g+g^{-1}) & (g\in S^1) \\
1+\lambda(\gamma)^2 & (g\notin S^1). 
\end{cases}
\end{align*}
Since $p$ is odd, $\lambda(\gamma)^2\neq -1$. By the assumption, $g$ is generic, and thus $\tr_{(\gamma,g)}(\bigwedge^*\lambda\otimes{h})\neq 0$.

Since $\bigwedge^*\lambda\otimes{c}=1\otimes 1-\lambda\otimes c$,
\begin{align*}
\tr_{(\gamma,g)}(\bigwedge^*\lambda\otimes c)
&=\begin{cases}1-\lambda(\gamma) & (g\in S^1) \\
1+\lambda(\gamma) & (g\notin S^1). 
\end{cases}
\end{align*}
When $g\in S^1$, we notice that $1-\lambda(\gamma)\neq \overline{0}$ since $p$ is prime. Therefore, $1-\lambda(\gamma)\neq 0$. When $g\notin S^1$, $1+\lambda(\gamma)\neq 0$ since $p$ is odd.
\end{proof}

Thus, if $(\gamma,g)$ satisfies the above assumption, the following formula holds.
\begin{align}\label{eqn for SW for finite cover}
\tr_{(\gamma,g)}\bra{\alpha(\SW_X)\cdot \prod_{\lambda\neq \rho_{\rm triv}}\bigwedge^*\bra{\lambda\otimes{{2k}h}}
}=
\tr_{(\gamma,g)}\bra{\prod_{\lambda\neq \rho_{\rm triv}}\bigwedge^*\bra{\lambda\otimes{ ({m+1})c}}\cdot \alpha(\SW_M)}.
\end{align}
We will use this formula in order to study $\alpha(\SW_X)$.

\begin{pro}
$\alpha_k(X)$ vanishes for $k>0$ and $\alpha_0(X)=\widetilde{\alpha}_0(X)$.
\end{pro}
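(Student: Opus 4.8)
The plan is to test the identity \eqref{eqn for SW for finite cover} against the characters $\tr_{(\gamma,g)}$ with $g=e^{i\theta}\in S^1$ generic, and then recover the vanishing of the $\alpha_k(X)$'s by an interpolation argument in the variable $t:=g+g^{-1}=2\cos\theta$. This mirrors the computation of $\alpha_0(X)+\widetilde{\alpha}_0(X)$ for odd-order $\Gamma$, except that letting $g$ range over all of $S^1$ produces infinitely many test values of $t$, which is exactly what allows us to separate the distinct powers of $h$.

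First I would record the character values on $R(\Pin(2))$ at $g=e^{i\theta}\in S^1$, namely $\tr_g(c)=1$ and $\tr_g(h^k)=t^k$. Writing $\alpha(\SW_X)=\alpha_0(X)-\widetilde{\alpha}_0(X)c+\sum_{k\ge1}\alpha_k(X)h^k$ (a finite sum), this gives
\[
\tr_{(\gamma,g)}\bigl(\alpha(\SW_X)\bigr)=\tr_\gamma\bigl(\alpha_0(X)-\widetilde{\alpha}_0(X)\bigr)+\sum_{k\ge1}t^k\,\tr_\gamma\bigl(\alpha_k(X)\bigr).
\]
The main step is to show the left-hand side vanishes for \emph{every} $\gamma\in\bb{Z}_p$ and every generic $g\in S^1$. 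For $\gamma\ne\overline{0}$ this comes from \eqref{eqn for SW for finite cover}: by Furuta's formula $\alpha(\SW_M)=2^{m-2k-1}(1-c)$, and $\tr_g(1-c)=1-1=0$ for $g\in S^1$, so the right-hand side of \eqref{eqn for SW for finite cover} is $0$; on the left-hand side the cofactor $\prod_{\lambda\ne\rho_{\rm triv}}\bigwedge^*(\lambda\otimes 2kh)$ has character $\prod_{\lambda\ne\rho_{\rm triv}}\bigl(1+\lambda(\gamma)^2-\lambda(\gamma)(g+g^{-1})\bigr)^{2k}$ at $(\gamma,g)$ — the same computation as in Lemma \ref{coeff. does not vanish} — which equals $1$ when $k=0$ and is otherwise nonzero for all but finitely many $g\in S^1$. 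Since $\tr_{(\gamma,g)}\colon R(\bb{Z}_p\times\Pin(2))\to\C$ is a ring homomorphism, I may cancel this nonzero scalar and deduce $\tr_{(\gamma,g)}(\alpha(\SW_X))=0$. For $\gamma=\overline{0}$ the cofactor in \eqref{eqn for SW for finite cover} degenerates, so instead I invoke Theorem \ref{Fur}: the restriction of $\alpha(\SW_X)$ to $\Pin(2)$ is $2^{m(X)-2k(X)-1}(1-c)$, whose $S^1$-character is again $0$.

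Granting this, fix a generic $\theta$; then $\gamma\mapsto\tr_\gamma\bigl(\alpha_0(X)-\widetilde{\alpha}_0(X)+\sum_{k\ge1}t^k\alpha_k(X)\bigr)$ is identically zero on $\bb{Z}_p$, so by linear independence of irreducible characters $\alpha_0(X)-\widetilde{\alpha}_0(X)+\sum_{k\ge1}t^k\alpha_k(X)=0$ in $R(\bb{Z}_p)\otimes_{\Z}\C$. Reading off the multiplicity of a fixed irreducible character $\lambda$ of $\bb{Z}_p$ yields an integer polynomial in $t$ vanishing at infinitely many $t$ (all but finitely many values of $2\cos\theta$), hence identically zero; therefore the $\lambda$-multiplicity of each $\alpha_k(X)$ with $k\ge1$ is $0$ and those of $\alpha_0(X)$ and $\widetilde{\alpha}_0(X)$ agree. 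Since $\lambda$ is arbitrary, $\alpha_k(X)=0$ for $k>0$ and $\alpha_0(X)=\widetilde{\alpha}_0(X)$.

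The only delicate point I anticipate is the bookkeeping of genericity in the middle step: one must avoid the finitely many $g\in S^1$ at which some cofactor character vanishes while keeping infinitely many test values $t=g+g^{-1}$ for the interpolation. This is automatic, since there are finitely many pairs $(\gamma,\lambda)$ with $\gamma\ne\overline{0}$ and each excludes only finitely many $g$.
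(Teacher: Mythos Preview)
Your proposal is correct and follows essentially the same route as the paper's own proof: evaluate \eqref{eqn for SW for finite cover} at $(\gamma,g)$ with $g\in S^1$ generic, use $\alpha(\SW_M)=2^{m-2k-1}(1-c)$ to kill the right-hand side, cancel the nonvanishing cofactor via Lemma~\ref{coeff. does not vanish} to get $\tr_{(\gamma,g)}(\alpha(\SW_X))=0$ for $\gamma\neq\overline{0}$, supply the $\gamma=\overline{0}$ case from Theorem~\ref{Fur}, and then read off the vanishing of each $\tr_\gamma(\alpha_k(X))$ from the fact that a polynomial in $t=g+g^{-1}$ with infinitely many zeros is identically zero. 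The only cosmetic difference is that you first pass to $R(\bb{Z}_p)\otimes_{\Z}\C$ and then extract irreducible multiplicities, whereas the paper works directly with the scalar traces $\tr_\gamma$; the content is identical.
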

\begin{proof}
Thanks to the formula
$\alpha(\SW_M)=2^{m-2k-1}(1-c)$, we have 
$$\tr_{(\gamma,g)}\bra{\prod_{\lambda\neq \rho_{\rm triv}}\bigwedge^*\bra{\lambda\otimes{ ({m+1})c}}\cdot \alpha(\SW_M)}=0$$
if $g\in S^1$. Therefore, we obtain
$$\tr_{(\gamma,g)}\bra{\alpha(\SW_X)\cdot \prod_{\lambda\neq \rho_{\rm triv}}\bigwedge^*\bra{\lambda\otimes{{2k}h}}
}=0$$
for $\gamma\neq \overline{0}$. Since $\tr_{(\gamma,g)}\bra{\prod_{\lambda\neq \rho_{\rm triv}}\bigwedge^*\bra{\lambda\otimes{{2k}h}}
}\neq 0$, we obtain
$$\tr_\gamma\bra{\alpha_0(X)}-\tr_\gamma\bra{\widetilde{\alpha}_0(X)}+\sum_{k=1}^\infty \tr_\gamma\bra{\alpha_k(X)}(g+g^{-1})^k=0.$$
Since $g$ is generic, $\tr_\gamma\bra{\alpha_k(X)}=0$ for arbitrary $k$ and $\gamma\neq \overline{0}$.

By the result $\alpha(\SW_X)=2^{m_X-2k_X-1}(1-c)$ again, $\tr_e\bra{\alpha_k(X)}=0$ for arbitrary $k$ and $\gamma\neq \overline{0}$.

Combining these results, we notice that $\alpha_0(X)=\widetilde{\alpha}_0(X)$ and $\alpha_k(X)=0$ in $R(\bb{Z}_p)$.
\end{proof}

\begin{thm}\label{main thm 2}
$$\alpha(\SW_X)=\bra{\frac{2^{(m-2k+1)p-2}-2^{m-2k-1}}{p}[L^2(\bb{Z}_p)]+2^{m-2k-1}\rho_{\rm triv}}(1-c).$$
\end{thm}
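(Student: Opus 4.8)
The plan is to combine the preceding Proposition with Theorem~\ref{thm main thm}. By the preceding Proposition we already know that $\alpha_k(X)=0$ for all $k\geq 1$ and $\alpha_0(X)=\widetilde{\alpha}_0(X)$, so that $\alpha(\SW_X)=\alpha_0(X)(1-c)$ with $\alpha_0(X)\in R(\bb{Z}_p)$; the only thing left is to pin down $\alpha_0(X)$. Applying Theorem~\ref{thm main thm} with $\Gamma=\bb{Z}_p$ (which has odd order) gives
\[
2\,\alpha_0(X)=\alpha_0(X)+\widetilde{\alpha}_0(X)=2^{m-2k}\left(\frac{2^{(p-1)(m-2k+1)}-1}{p}[L^2(\bb{Z}_p)]+\rho_{\rm triv}\right).
\]

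Next I would check that the right-hand side is divisible by $2$ in $R(\bb{Z}_p)$. The coefficient $\tfrac{2^{(p-1)(m-2k+1)}-1}{p}$ is an integer by Fermat's little theorem ($2^{p-1}\equiv 1\pmod p$), so the parenthesised expression is a genuine element of $R(\bb{Z}_p)$; since $R(\bb{Z}_p)$ is a free $\bb{Z}$-module, division by $2$ is unambiguous, yielding
\[
\alpha_0(X)=2^{m-2k-1}\left(\frac{2^{(p-1)(m-2k+1)}-1}{p}[L^2(\bb{Z}_p)]+\rho_{\rm triv}\right).
\]
Absorbing the factor $2^{m-2k-1}$ into the coefficient of $[L^2(\bb{Z}_p)]$ via the exponent identity $(m-2k-1)+(p-1)(m-2k+1)=(m-2k+1)p-2$ turns this into
\[
\alpha_0(X)=\frac{2^{(m-2k+1)p-2}-2^{m-2k-1}}{p}[L^2(\bb{Z}_p)]+2^{m-2k-1}\rho_{\rm triv},
\]
and multiplying by $(1-c)$ gives the asserted formula.

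There is no real obstacle here; the only points needing attention are the integrality/torsion-freeness justification for dividing by $2$ and the bookkeeping with exponents. As an alternative route, more in the spirit of this section and serving as a consistency check, one can identify $\alpha_0(X)$ directly through its character: Theorem~\ref{Fur} together with the multiplicativity of the Atiyah--Hitchin--Singer and Dirac indices under the covering $X\to M$ gives $\tr_e(\alpha_0(X))=2^{(m-2k+1)p-2}$, while evaluating the identity \eqref{eqn for SW for finite cover} at $(\gamma,g)$ with $\gamma\neq\overline{0}$ and $g\notin S^1$ and using Lemma~\ref{lem3.7} (so that each relevant $\bigwedge^{*}$-product contributes trace $1$) gives $\tr_\gamma(\alpha_0(X))=2^{m-2k-1}$ for $\gamma\neq\overline{0}$; solving $\alpha_0(X)=a[L^2(\bb{Z}_p)]+b\rho_{\rm triv}$ against these two character values recovers the same $a$ and $b$, since an element of $R(\bb{Z}_p)$ is determined by its character.
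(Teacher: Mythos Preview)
Your argument is correct. Your main route---invoking Theorem~\ref{thm main thm} for $\Gamma=\bb{Z}_p$ to obtain $2\alpha_0(X)$ and then dividing by $2$ in the torsion-free module $R(\bb{Z}_p)$---is a legitimate shortcut; since $\alpha_0(X)$ is already known to lie in $R(\bb{Z}_p)$, torsion-freeness guarantees uniqueness of the half, and your exponent bookkeeping is right. The paper does \emph{not} take this shortcut: it re-derives the character values of $\alpha_0(X)$ directly from equation~\eqref{eqn for SW for finite cover} and Lemma~\ref{coeff. does not vanish} (together with Theorem~\ref{Fur} for the value at $\gamma=\overline{0}$), and then solves for the coefficients in the ansatz $x[L^2(\bb{Z}_p)]+y\rho_{\rm triv}$. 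That is exactly what you describe as your ``alternative route'', so your consistency check is in fact the paper's own proof. Your approach is cleaner because it reuses the general odd-order result already established; the paper's approach is more self-contained within the $\bb{Z}_p$ section and makes explicit where the two character values come from.
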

\begin{proof}
The proof is parallel to the case of  Theorem \ref{thm main thm}.

Applying Lemma \ref{coeff. does not vanish} to Equation (\ref{eqn for SW for finite cover}), we obtain the equlities
$$\tr_\gamma(\alpha_0(X))=
\begin{cases}2^{m-2k-1} & (\gamma\neq \overline{0}),  \\
2^{(m-2k+1)p-2}& (\gamma= \overline{0}).
\end{cases}$$
Thus, $\alpha_0(X)$ is of the form $x\sum_{\rho}\rho+y\rho_{\rm triv}$ for some $x,y\in\bb{Z}$. Since
$$\tr_\gamma\bra{x\sum_{\rho}\rho+y\rho_{\rm triv}}=
\begin{cases}y & (\gamma\neq \overline{0}) \\
px+y & (\gamma= \overline{0}),
\end{cases}$$
$y=2^{m-2k-1}$ and $x=\frac{2^{(m-2k+1)p-2}-2^{m-2k-1}}{p}$. Since $L^2(\bb{Z}_p)=\sum_{\rho}\rho$, we obtain the result.
\end{proof}

\begin{rmk}
$\frac{2^{(m-2k+1)p-2}-2^{m-2k-1}}{p}$ is an integer, because
\begin{align*}
2^{(m-2k+1)p-2}-2^{m-2k-1}
&= 2^{m-2k-1}(2^{(m-2k+1)(p-1)}-1)
\end{align*}
is a multiple of $p$ by  Fermat's little theorem.
\end{rmk}

\subsection{The case of $\bb{Z}_6$}

By combining our result and that of Bryan, we can obtain some information for more general $p$. As a typical example, we study the case when $p=6$.

\begin{dfn}
Let $i_2:\bb{Z}_2\hookrightarrow \bb{Z}_6$ and $i_3:\bb{Z}_3\hookrightarrow \bb{Z}_6$ be the natural embeddings defined by
$$i_2(\overline{1}):=\overline{3},\ \ \ i_3(\overline{1}):=\overline{2}.$$
\end{dfn}

Let $X$ be a  Spin $4$-manifold equipped with a free Spin action of $\bb{Z}_6$. We denote the orbit space $X/\bb{Z}_6$ by $M$. Then we have the equivariant $K$-theory degree $\alpha(\SW_X)\in R(\bb{Z}_6\times {\rm Pin}(2))$. It is of the form 
$$\alpha(\SW_X)=\sum_{i=0}^5 \beta_i\otimes \rho_i,$$
where $\beta_i\in R({\rm Pin}(2))$ and $\rho_i$ is the $i$-th tensor power of the natural representation of $\bb{Z}_6$.
By pulling back it along $i_2$, we obtain an invariant as a $\bb{Z}_2$-manifold
$$i_2^*(\alpha(\SW_X))\in R(\bb{Z}_2\times {\rm Pin}(2)).$$
Similarly, 
$$i_3^*(\alpha(\SW_X))\in R(\bb{Z}_3\times {\rm Pin}(2)).$$

Although we have not determined $\alpha(\SW_X)$, we obtain some information on them from our result and that of Bryan. In order to describe the result, we introduce new symbols.
We have been using symbols $m$ and $k$ for invariants of $M=X/\Gamma$ so far. From now on, we use $m_X=b^+(X)$ and $k_X=\sigma(X)$ to describe all the results, because the ``base manifold'' of $i_2^*(\alpha(\SW_X))$ and that of $i_3^*(\alpha(\SW_X))$ are different. For example, Theorem \ref{bry} is rewritten as 
$$i_2^*(\alpha(\SW_X))=2^{2(m-2k+1)-2-1}[L^2(\bb{Z}_2)](1-c)
=2^{(m_X-2k_X+1)-2-1}[L^2(\bb{Z}_2)](1-c)=2^{m_X-2k_X-2}[L^2(\bb{Z}_2)](1-c).$$
Note that these invariants are related as follows
$$k_X=k\times \#\Gamma=-\frac{\sigma(X/\Gamma)\times \#\Gamma}{16},$$
$$m_X+1=(m+1)\times \#\Gamma=(b^+(M)+1)\times \#\Gamma.$$

Set  
\begin{align*}
& A:= 2^{m_X-2k_X+1-2-1}(1-c)
= 2^{m_X-2k_X-2}(1-c), \\
  & B:=\frac{2^{m_X-2k_X-1}+
 2^{\frac{m_X-2k_X+1}{3}-1}}{3}(1-c), \\
 &C:=\frac{2^{m_X-2k_X-1}-
 2^{\frac{m_X-2k_X+1}{3}-2}}{3}(1-c).
 \end{align*}
 
\begin{cor}\label{Z_6-action}
The equations
\begin{align*}
\beta_2&= A-C-\beta_0+\beta_1,\\
\beta_3&= B-\beta_0, \\
\beta_4&= C-\beta_1, \\
\beta_5&= -A + 2C + \beta_0 - \beta_1
\end{align*}
hold in $R(\Pin(2))$.

 In particular
$\alpha(\SW_X)=\sum_{i=0}^5 \beta_i\otimes \rho_i$
is determined by the two values of elements $\beta_0$ and $\beta_1$.
\end{cor}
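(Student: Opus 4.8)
The plan is to squeeze all available information out of the two embeddings $i_2$ and $i_3$: restrict $\alpha(\SW_X)$ along each of them, identify the restricted classes by Theorem~\ref{bry} (for $i_2$) and by Theorem~\ref{main thm 2} with $p=3$ (for $i_3$), and then solve the resulting system of linear equations in $R(\Pin(2))$ for $\beta_2,\dots,\beta_5$. Since the $\Pin(2)$-factor is untouched by the restriction maps $i_j^*$, the whole computation is controlled by how the $\bb{Z}_6$-characters $\rho_l$ decompose under $i_2^*$ and $i_3^*$.

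First I would record the character restrictions. Because $i_2(\overline 1)=\overline 3$, one gets $i_2^*\rho_l=\tau_{l\bmod 2}$, where $\tau_0,\tau_1$ are the trivial and the sign character of $\bb{Z}_2$; hence $i_2^*(\alpha(\SW_X))=(\beta_0+\beta_2+\beta_4)\otimes\tau_0+(\beta_1+\beta_3+\beta_5)\otimes\tau_1$. Similarly $i_3(\overline 1)=\overline 2$ gives $i_3^*\rho_l=\sigma_{l\bmod 3}$ for the characters $\sigma_0,\sigma_1,\sigma_2$ of $\bb{Z}_3$, so $i_3^*(\alpha(\SW_X))=(\beta_0+\beta_3)\otimes\sigma_0+(\beta_1+\beta_4)\otimes\sigma_1+(\beta_2+\beta_5)\otimes\sigma_2$. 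On the other side, $i_2(\bb{Z}_2)$ and $i_3(\bb{Z}_3)$ act freely on $X$ with quotients $X_2$ and $X_3$ sitting in towers $X\to X_j\to M$, and the multiplicativity relations $k_X=\#\Gamma\cdot k$, $m_X+1=\#\Gamma\cdot(m+1)$ applied to these intermediate covers let me rewrite the exponents in Bryan's formula and in Theorem~\ref{main thm 2} purely in terms of $m_X-2k_X$. Carrying this out, Theorem~\ref{bry} (case $q=1$, applicable since $b^+(X)\neq b^+(X_2)$) yields $i_2^*(\alpha(\SW_X))=2^{m_X-2k_X-2}[L^2(\bb{Z}_2)](1-c)=A(\tau_0+\tau_1)$, while Theorem~\ref{main thm 2} yields $i_3^*(\alpha(\SW_X))=\bigl(\tfrac{2^{m_X-2k_X-1}-2^{(m_X-2k_X+1)/3-2}}{3}[L^2(\bb{Z}_3)]+2^{(m_X-2k_X+1)/3-2}\rho_{\rm triv}\bigr)(1-c)$, whose $\sigma_0$-coefficient simplifies to $B$ and whose $\sigma_1$- and $\sigma_2$-coefficients are each $C$.

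Comparing coefficients of the irreducible characters of $\bb{Z}_2$ and of $\bb{Z}_3$ on the two sides then produces the five equations $\beta_0+\beta_2+\beta_4=A$, $\beta_1+\beta_3+\beta_5=A$, $\beta_0+\beta_3=B$, $\beta_1+\beta_4=C$, $\beta_2+\beta_5=C$ in $R(\Pin(2))$. Solving (no division is required, only substitution): $\beta_3=B-\beta_0$ and $\beta_4=C-\beta_1$, then $\beta_2=A-\beta_0-\beta_4=A-C-\beta_0+\beta_1$, and finally $\beta_5=C-\beta_2=-A+2C+\beta_0-\beta_1$. The remaining equation $\beta_1+\beta_3+\beta_5=A$ is then automatic; it amounts exactly to the identity $B+2C=2A$, which one checks directly from the definitions of $A,B,C$, and serves as a built-in consistency check. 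This exhibits $\beta_2,\beta_3,\beta_4,\beta_5$ as explicit expressions in $\beta_0,\beta_1$, giving the claimed formulas and the final assertion that $\alpha(\SW_X)$ is determined by $\beta_0$ and $\beta_1$.

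The one genuinely delicate point is the index bookkeeping along the tower $X\to X_j\to M$: one must keep straight that the ``base'' invariants feeding Bryan's formula are those of $X_2=X/i_2(\bb{Z}_2)$ (itself a $\bb{Z}_3$-manifold) and that the ones feeding Theorem~\ref{main thm 2} are those of $X_3=X/i_3(\bb{Z}_3)$ (a $\bb{Z}_2$-manifold), and convert both back to $m_X,k_X$ via multiplicativity of the AHS and Dirac indices, while also checking that the standing hypotheses of the cited theorems (freeness of the actions, $b^1=0$, $b^+>0$ on the relevant base, and Bryan's inequality on $b^+$) hold for these intermediate covers. Once the exponent arithmetic is pinned down, the rest is elementary linear algebra over $R(\Pin(2))$.
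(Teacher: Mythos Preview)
Your proposal is correct and follows essentially the same approach as the paper: restrict $\alpha(\SW_X)$ along $i_2$ and $i_3$, identify the restrictions via Theorem~\ref{bry} and Theorem~\ref{main thm 2} respectively (rewritten in terms of $m_X,k_X$), obtain the five linear equations $\beta_0+\beta_2+\beta_4=A$, $\beta_1+\beta_3+\beta_5=A$, $\beta_0+\beta_3=B$, $\beta_1+\beta_4=C$, $\beta_2+\beta_5=C$, and solve. Your write-up is in fact more careful than the paper's in spelling out the character restrictions, the index bookkeeping along the intermediate quotients, and the consistency identity $B+2C=2A$.
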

\begin{proof}
By Theorem \ref{bry}, $i_2^*(\alpha(\SW_X))=2^{m_X-2k_X+1-3}[L^2(\bb{Z}_2)](1-c)$. It implies 
$$\beta_0+\beta_2+\beta_4=2^{m_X-2k_X-2}(1-c)=A,$$
$$\beta_1+\beta_3+\beta_5=2^{m_X-2k_X-2}(1-c)=A.$$
Similarly, by Theorem \ref{main thm 2},
$$\beta_0+\beta_3=\frac{2^{m_X-2k_X-1}+2^{\frac{m_X-2k_X+1}{3}-1}}{3}(1-c)=B$$
$$\beta_1+\beta_4=\frac{2^{m_X-2k_X-1}-2^{\frac{m_X-2k_X+1}{3}-2}}{3}(1-c)=C$$
$$\beta_2+\beta_5=\frac{2^{m_X-2k_X-1}-2^{\frac{m_X-2k_X+1}{3}-2}}{3}(1-c)=C.$$
Now, it is easy to see the result from these equations.
\end{proof}

\section*{Acknowledgements}
Kato was supported by JSPS KAKENHI 17K18725
and 17H06461.
Takata was supported by JSPS KAKENHI 18J00019 and 21K20320.

Tsuyoshi Kato,
Department of Mathematics,
Faculty of Science,
Kyoto University,
Kyoto 606-8502
Japan. 

E-mail address: {\tt tkato@math.kyoto-u.ac.jp}

\

Doman Takata, 
Faculty of Education Mathematical and Natural Sciences,
Niigata University, 
8050 Ikarashi 2-no-cho, Nishi-ku, Niigata, 950-2181, Japan. 

E-mail address: {\tt d.takata@ed.niigata-u.ac.jp}

\end{document}